\newtheorem{Theorem}[equation]{Theorem}
\newtheorem{Corollary}[equation]{Corollary}
\newtheorem{Lemma}[equation]{Lemma}
\newtheorem{Proposition}[equation]{Proposition}
\theoremstyle{definition}
\newtheorem{Definition}[equation]{Definition}
\newtheorem{Remark}[equation]{Remark}
\numberwithin{equation}{section}
\numberwithin{figure}{section}
\newcommand{\PP}{{\mathbb P}}
\newcommand{\C}{{\mathbb C}}
\newcommand{\mc}[1]{\mathcal{#1}}
\newcommand{\mt}[1]{\text{#1}}
\newcommand{\mbf}[1]{\mathbf{#1}}
\newcommand{\Pic}[1]{\text{Pic}(#1)}
\newcommand{\Comp}[1]{\mbf{\Delta} \setminus #1}
\begin{document}

\title{Toroidal Schubert Varieties}

\author{Mahir Bilen Can\footnote{\hbox{Corresponding author: Email: mahirbilencan@gmail.com, Phone: 1(504) 862 3448, Fax: 1(504) 865 5063}}, 
Reuven Hodges, and Venkatramani Lakshmibai}

\maketitle

\begin{abstract}

Levi subgroup actions on Schubert varieties are studied. 
In the case of partial flag varieties, the horospherical actions are determined.
This leads to a characterization of the toroidal 
and horospherical partial flag varieties with Picard number 1. 
In the more general case, we provide a set of necessary conditions for the action of a Levi
subgroup on a Schubert variety to be toroidal. The singular locus of a (co)minuscule Schubert variety is shown 
to contain all the $L_{max}$-stable Schubert subvarieties, where $L_{max}$ is the standard Levi subgroup 
of the maximal parabolic which acts on the Schubert variety by left multiplication.
In type A, the effect of the Billey-Postnikov decomposition on 
toroidal Schubert varieties is obtained. 

\vspace{.5cm}

\noindent
\textbf{Keywords:} Toroidal Schubert varieties, horospherical actions, Billey-Postnikov decomposition \\ 
\noindent 
\textbf{MSC:}{ 14M15, 14M27} 
\end{abstract}

\normalsize

\section{Introduction}

This paper was first motivated by the following two questions.
\begin{itemize}
\item Which Schubert varieties are toric varieties? 
\item More generally, which Schubert varieties are spherical varieties?
\end{itemize}
On the one hand, it has been known for some time that,
except for a few trivial cases, such as the projective 
space, in general, Schubert varieties are not toric varieties. 
Nevertheless, one knows that every Schubert 
variety has a degeneration to a 
toric variety, see~\cite{Caldero}.
On the other hand, less is known  
with regard to the second question. 
Let us briefly describe 
our current state of knowledge.
Let $G$ be a connected 
reductive complex algebraic
group and let $P$ be a 
parabolic subgroup of $G$. 
We fix a Borel subgroup
$B$ of $G$ that is contained in $P$. 

Clearly, a Schubert variety 
$X$ in $G/P$ is not 
$G$-stable unless 
the equality $X=G/P$ holds true. 
Therefore, we look for the reductive subgroups 
of $G$ which act on $X$. 
Some natural candidates are 
given by the Levi subgroups of 
the stabilizer subgroup $Q:=\mt{Stab}_G(X)$.   
At one extreme, in the Grassmann
variety of $k$ dimensional subspaces of 
$\C^n$, denoted by $\mbf{Gr}(k,n)$,
we know the complete classification 
of the Schubert varieties 
which are spherical with respect to the
action of a Levi subgroup of  
$\mbf{GL}_n=\mbf{GL}_n(\C)$, see~\cite{HodgesLakshmibai}.
At the other extreme, if we assume 
that $X$ is a smooth Schubert variety 
in a partial flag variety $G/P$, where 
$G$ does not have any $\text{G}_2$-factors,
then we have by~\cite{CanHodges} that 
$X$ is a spherical $L_{max}$-variety, 
$L_{max}$ being the Levi factor of the stabilizer 
subgroup in $G$ of $X$.

By definition, a spherical $G$-variety $X$ 
is called {\em toroidal} if in $X$ every $B$-stable 
irreducible divisor which contains 
a $G$-orbit is $G$-stable. 
It turns out that such a
variety $X$ is `locally toric' in the sense
that it contains a toric subvariety $Z$, 
for some torus $T_Z$,  
such that the $G$-orbit 
structure of $X$ is completely 
determined by the $T_Z$-orbit 
structure of $Z$. This remark 
will be made more precise in the sequel.
Regarding this definition, our purpose in the present paper is two-fold;
\begin{enumerate}
\item we characterize the partial flag varieties that are ``horospherical'' and may or may not be toroidal;
\item we initiate a program for determining the Schubert varieties which are toroidal;
\end{enumerate}
Here, horospherical means that the stabilizer of a point from the open orbit
contains a maximal unipotent subgroup. 
We should mention that in this paper we focus only on the actions
of Levi subgroups on Schubert varieties, so, one can 
ask for the analogous results for the other reductive subgroups 
of $G$. See~\cite{AvdeevPetukhov, AvdeevPetukhov2, Brion01}. 
We leave this investigation to a future paper.

Now we are ready to state our main results and at the same time 
give a brief overview of our paper.

In our preliminaries section, we set up our notation and provide 
the basic definitions and results regarding toroidal and regular $G$-varieties.
In Section~\ref{S:3}, we recall Pasquier's work from~\cite{Pasquier} 
on smooth projective horospherical varieties of Picard number 1.
Then we record our first main result,
Theorem~\ref{T:horospherical}, where we classify the partial flag varieties 
on which a Levi subgroup acts horospherically. 
This allows us, in Section~\ref{S:4}, to characterize the horospherical and toroidal actions of Levi subgroups on 
partial flag varieties which have Picard number 1. 

In Section~\ref{S:LeviStable}, 
Proposition~\ref{P:LorbitCriterion} provides a general method for determining if a Schubert variety contains an orbit of a fixed Levi subgroup. Then in the subsequent section, we specialize to the type A Grassmannian, and give precise combinatorial criterion in Proposition~\ref{P:maxConditionHeads} for Schubert divisors to be stable under the action of a fixed Levi subgroup. These culminate in our second main result, Corollary~\ref{C:toroidalGrassNesc}, where we provide a set of necessary (though not sufficient) conditions for a Schubert variety in the Grassmannian to be toroidal.
In Section~\ref{S:Singularlocus}, we focus on the singularities 
of a Schubert variety in a partial flag variety $G/Q$,
where $Q$ is a (co)minuscule parabolic subgroup 
in a simple algebraic group.  
We show that the singular locus of a Schubert
variety $X_{wQ}$ contains all $L_{max}$-stable Schubert subvarieties 
of $X_{wQ}$ (Proposition~\ref{P:singularlocus}), where
$L_{max}$ is the standard Levi subgroup of the stabilizer subgroup $P:=\mt{Stab}_G(X_{wQ})$. 
Hence $X_{wQ}$ has no $L_{max}$-stable Schubert divisors. 
The final section of our paper is about the Billey-Postnikov 
decomposition of a permutation $w$ such that $X_{w\mbf{P}}$ 
is a smooth toroidal Schubert $\mbf{L}$-variety in some partial flag 
variety $\mbf{GL}_n/\mbf{P}$. (The Billey-Postnikov 
decomposition, that we abbreviate to BP-decomposition,
is a certain parabolic decomposition of $w$. 
We will explain this notion in more detail in the sequel.)
Under this assumption, we state our final main result, Theorem~\ref{T:divisors to divisors1}; 
we prove that if $w=vu$ is the BP decomposition 
of $w$ with the corresponding projection 
$\pi : X_{w \mathbf{P}} \rightarrow X_{v\mathbf{Q}}$, 
where $\mathbf{Q}$ is a maximal parabolic subgroup of 
$\mathbf{GL}_n$, then both of the 
Schubert varieties $X_{v\mathbf{Q}}$ and $X_{u\mathbf{P}}$
are smooth toroidal Schubert $\mbf{L}$-varieties. In combination with our results from Sections~\ref{S:LeviStable} and~\ref{S:ToroidalinGrassmann} the above provides necessary conditions for a Schubert variety in any type A (partial) flag variety to be toroidal.

\vspace{.5cm}

\textbf{Acknowledgements.} 
The first author is partially supported by a grant from the Louisiana Board of Regents. 
We are grateful to the referee whose comments helped us to see and fix our mistakes;
this greatly improved the quality of our paper.

\section{Preliminaries}
\label{S:Preliminaries}

We begin with fixing our basic notation.
$$
\begin{array}{lcl}
G &: & \text{ a connected reductive group defined over $\C$;}\\
T &: & \text{ a maximal torus in $G$;}\\
W &: & \text{ the Weyl group of $(G,T)$;}\\
B &: & \text{ a Borel subgroup of $G$ such that $T\subset B$;}\\
U &: & \text{ the unipotent radical of $B$ (so, $B=T\ltimes U$);}\\
S &: & \text{ the Coxeter generators of $W$ determined by $B$;}\\
C_w &: & \text{ the $B$-orbit through $wB/B$ ($w\in W$) in $G/B$;}\\
\ell &: & \text{ the length function defined by $w\mapsto \dim C_w$ ($w\in W$);}\\
X_{w} &: & \text{ the Zariski closure of $C_w$ ($w\in W$) in $G/B$;}\\
P,Q &:& \text{ parabolic subgroups in $G$;}\\
p_{{\scriptscriptstyle P,Q}} &: & \text{ the canonical projection 
$p_{{\scriptscriptstyle P,Q}}:G/P\rightarrow G/Q$
if $P\subset Q \subset G$;}\\
W_Q &: & \text{ the unique subgroup of $W$ corresponding to $Q$ where $B\subset Q\subset G$;}\\
W^Q &: & \text{ the minimal length left coset representatives
of $W_Q$ in $W$;}\\
X_{wQ} &: & \text{ the image of $X_w$ in $G/Q$ under $p_{{\scriptscriptstyle B,Q}}$;}\\
R_u(H) &: & \text{ the unipotent radical of an algebraic group $H$.}
\end{array}
$$
A variety of the form $X_{wQ}$, where $Q$ is any parabolic subgroup  
in $G$ and $w\in W^Q$ is called a \emph{Schubert variety}.
Its ambient homogenous space $G/Q$ is called a \emph{(partial) flag variety}. 
The \emph{Bruhat-Chevalley} order on $W^Q$ is defined so that for $v$ and $w$ from $W^Q$ 
we have 
$$
w \leq v \iff X_{wQ} \subseteq X_{vQ}.
$$

Let $w_0$ denote the element with maximal length in $W$
and let $w_{0,Q}$ denote the element with maximal length in $W_Q$.


If $G$ is the general linear group $\mbf{GL}_n$ and and $Q$ is a maximal parabolic subgroup in $G$ whose simple roots omit $\alpha_k$,
then $G/Q$ is isomorphic to the \emph{Grassmann variety} $\mbf{Gr}(k,n)$.

A parabolic subgroup $Q$ from $G$ is said to be \emph{standard} with respect to $B$,
or, \emph{standard} if $B\subseteq Q$. 
Let us assume that $T\subseteq Q$. 
A Levi subgroup $L$ of $Q$ 
is called \emph{standard} with respect to $T$, or, \emph{standard} if it contains $T$.

Let $\mathbb{T}$ be the $n$ dimensional complex torus, $\mathbb{T}:=(\C^*)^n$. 
A complex normal algebraic $\mathbb{T}$-variety $X$ is called a {\em toric variety} if $X$ admits an
algebraic group action $\mathbb{T}\times X \rightarrow X$ with an open $\mathbb{T}$-orbit. 
Let us mention in passing that, being an abelian group, 
$\mathbb{T}$ is a Borel subgroup of itself, that is to say, a maximal connected
solvable subgroup.
More generally, let $G$ be a complex connected reductive group and 
$X$ be a normal complex algebraic variety 
with an algebraic action $G\times X\rightarrow X$ 
such that a Borel subgroup $B$ of $G$ has an open orbit in $X$. 
In this case, $X$ is called a {\em spherical $G$-variety}. Clearly, 
a toric variety is a spherical variety. 
A closed subgroup 
$H$ of $G$ is said to be a {\em spherical subgroup} 
if $G/H$ is a spherical $G$-variety. 
Note that any parabolic subgroup $Q$ 
is a spherical subgroup. This is a consequence 
of the Bruhat-Chevalley decomposition of $G$.

\vspace{.25cm}

Let $H$ be a spherical subgroup of $G$ and let $X$ be a $G$-variety with an open $G$-orbit isomorphic to
 $G/H$, hence $X$ is a spherical $G$-variety.
The set of $B$-invariant prime divisors in $G/H$ will be denoted 
by $\mc{D}$. 
Let $Y$ be a $G$-orbit in $X$ and define 
$$
\mc{D}_Y(X):= \{ D\in \mc{D} :\ Y\subseteq \overline{D}\}
$$
and set 
$$
\mc{D}(X) := \bigcup_{Y \text{ is a $G$-orbit in $X$}} \mc{D}_Y(X).
$$
These sets of divisors are important in the 
classification of spherical varieties. 

\begin{Definition}\label{D:colorandtoroidal}
A {\em color} of $X$ is a $B$-stable prime divisor 
that is not $G$-stable. 
A spherical $G$-variety $X$ is called {\em toroidal} if $X$ has no colors
containing a $G$ orbit. In particular, $X$ is {\em toroidal} if 
$\mc{D}(X)$ is empty. In this case, we say that the action of $G$ is toroidal. 
\end{Definition}

We will provide an alternative characterization
of the toroidal varieties in the next proposition.
First we fix some notation.
Let $X$ be a spherical $G$-variety and let $P=P_X$ 
denote the stabilizer in $G$ of the set $\Delta_X$ which is  
defined by 
\begin{align*}
\Delta_X = \bigcup_{D\in \mc{D}} \overline{D}.
\end{align*}
Then $P$ is a parabolic subgroup.

\begin{Proposition}\label{P:BrionSpherique}
We preserve the notation from the previous paragraph.
In this case, 
$X$ is toroidal if and only if  
there exists a Levi subgroup $L$ of $P$ 
and a closed subvariety $Z$ of $X\setminus \Delta_X$ 
which is stable under $L$ such that 
\begin{itemize}
\item the map
$
R_u(P) \times Z \rightarrow X\setminus \Delta_X 
$
is an isomorphism;
\item the group $[L,L]$ 
acts trivially on $Z$, which is a toric variety 
for the torus $L/[L,L]$. 
Furthermore, every $G$-orbit 
meets $Z$ along a unique $L$-orbit.  
\end{itemize}
\end{Proposition}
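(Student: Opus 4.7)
The plan is to reduce the statement to the Brion--Luna--Vust local structure theorem for spherical varieties. That theorem asserts that for any spherical $G$-variety $X$, writing $X^0 := X\setminus \Delta_X$, there exist a Levi subgroup $L$ of $P=P_X$ and a closed $L$-stable affine subvariety $Z\subseteq X^0$ on which $[L,L]$ acts trivially, such that the multiplication map $R_u(P)\times Z \to X^0$ is an $R_u(P)\rtimes L$-equivariant isomorphism. This immediately yields the first bullet together with the triviality of the $[L,L]$-action, so the real task is to relate toroidality to the condition that $Z$ is a toric variety for $L/[L,L]$ with each $G$-orbit meeting $Z$ in a single $L$-orbit.

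For the ``if'' direction I would argue by contradiction: if some color $D\in \mc{D}$ contained a $G$-orbit $Y$, then $Y\subseteq \overline{D}\subseteq \Delta_X$ would force $Y\cap X^0=\emptyset$, contradicting the assumption that every $G$-orbit meets $Z\subseteq X^0$. For the ``only if'' direction I would invoke the local structure theorem to produce $L$ and $Z$, and then use toroidality to show that every $G$-orbit $Y$ meets $X^0$: otherwise the irreducible variety $Y$ would sit in $\overline{D}$ for some color $D$, violating toroidality. Via the product decomposition $X^0 \cong R_u(P)\times Z$ and the $R_u(P)$-invariance of $G$-orbits, the image of $Y\cap X^0$ in $Z$ is an irreducible $L$-invariant locally closed subset, and standard facts about algebraic group actions on irreducible varieties force it to be a single $L$-orbit rather than a proper union of such. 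Specializing to the open $G$-orbit produces an open $L$-orbit in $Z$, and since $[L,L]$ acts trivially on $Z$, this exhibits $Z$ as a normal variety for the torus $L/[L,L]$ with a dense orbit, i.e., a toric variety.

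The principal obstacle is the local structure theorem itself, which I would cite from \cite{Brion01} or a standard monograph rather than reprove; the rest of the argument is essentially bookkeeping once this is in hand. The one subtle point worth care is the claim that the image of $Y\cap X^0$ in $Z$ really is a single $L$-orbit; this is where the $R_u(P)$-triviality on the $Z$-factor of the decomposition is used crucially, together with the openness of any orbit in its closure for a connected algebraic group acting on an irreducible variety.
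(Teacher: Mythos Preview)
The paper's proof is simply the citation ``See~\cite[Proposition 1]{BrionSpherique}''; it does not give an argument. Your proposal is therefore more ambitious than the paper, and the overall architecture is right: the local structure theorem (in the form you cite) supplies the Levi $L$, the slice $Z$, the product decomposition $R_u(P)\times Z \cong X^0$, and the triviality of the $[L,L]$-action unconditionally; what remains is to match toroidality with the assertions that $Z$ is toric for $T':=L/[L,L]$ and that each $G$-orbit meets $Z$ in a single $L$-orbit. Your ``if'' direction is correct, as is your argument that toroidality forces every $G$-orbit to meet $X^0$ and that $Z$ carries a dense $T'$-orbit (push the open $B$-orbit through the product decomposition, using $U\cap L\subseteq [L,L]$ so that $B$-orbits in $X^0$ correspond to $T'$-orbits in $Z$).

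There is, however, a genuine gap at the point you yourself flag. The justification you give---openness of an orbit in its closure---yields only a \emph{dense} $L$-orbit in the image of $Y\cap X^0$ in $Z$, not a unique one; an irreducible $T'$-stable locally closed subset of a toric variety can certainly contain several orbits (e.g.\ $\mathbb{A}^2\setminus\{0\}$ under the standard $(\C^*)^2$-action). Closing this gap is precisely the substance of Brion's argument and is not a formality. One route is to show that $Y\cap X^0$ equals the open $B$-orbit of the spherical homogeneous space $Y$, which requires checking that the $B$-stable closed set $Y\cap \Delta_X$ is exactly the union of the colors of $Y$. Another route establishes the order-preserving injection $\overline{Y}\mapsto \overline{Y}\cap Z$ from $G$-orbit closures to $T'$-orbit closures and then proves it is a bijection, after which a partition/counting argument forces each $Y\cap Z$ to be a single $T'$-orbit. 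Either way, more is needed than what you wrote; as it stands the proposal handles the easy parts of Brion's proposition and defers, with an insufficient sketch, the hard one.
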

\begin{proof}
See~\cite[Proposition 1]{BrionSpherique}.
\end{proof}

Let $Y$ be a closed subvariety of $X$. 
The {\em normal cone to $X$ in $Y$}, denoted by $C_X(Y)$, is the cone over $X$ 
defined by the graded sheaf

Let $X$ be a $G$-variety such that $G$ has an open orbit,
denoted by $\Omega$. In this case, 
$X$ is called a {\em regular $G$-variety} if it satisfies the following 
three conditions:
\begin{enumerate}
\item[(1)] The closure of every $G$-orbit is smooth.
\item[(2)] For any orbit closure $Y\neq X$, $Y$ is the 
intersection of the orbit closures of codimension one 
containing $Y$.
\item[(3)] The isotropy group of any point $x\in X$ 
has a dense orbit in the normal space to the orbit $G\cdot x$ in $X$.
\end{enumerate}

The normal space that is mentioned in (3) is the quotient vector space 
\hbox{$T_p X/ T_p (G\cdot p)$}, 
where $T_pX$ is the tangent space of $X$ at $p$, and $T_p (G\cdot p)$ is the subspace of $T_pX$ 
that is isomorphic to the tangent space at $p$ of the homogeneous space $G\cdot p$. 
It follows from (1) that a regular $G$-variety is smooth.
Let $x$ be a point from $\Omega$ and let $H$ denote 
the isotropy group of $x$ in $G$. Then $\Omega \cong G/H$. 
The {\em boundary divisor of $X$} is the set $X\setminus \Omega$. 
It is a union of finitely many $G$-stable prime divisors, $X_1,\dots, X_\ell$.
In other words,
$$
X\setminus \Omega = \bigcup_{i=1}^\ell X_i.
$$

Regular $G$-varieties are independently introduced by
Bifet, De Concini, and Procesi in~\cite{BDP90} and by Ginsburg in~\cite{Ginsburg}.
Wonderful compactifications of symmetric varieties as well as 
all smooth toric varieties are examples of regular $G$-varieties. 

\begin{Lemma}\label{L:BB}
Let $X$ be a smooth and complete $G$-variety. 
Then $X$ is a regular $G$-variety if and only if $X$ is a toroidal spherical $G$-variety. 
\end{Lemma}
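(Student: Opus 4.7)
The plan is to verify the two implications separately, using the local structure theorem for toroidal spherical varieties (Proposition~\ref{P:BrionSpherique}) for the forward direction, and a Luna slice argument at the closed orbits for the converse.

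For the implication that a smooth complete toroidal spherical $G$-variety $X$ is regular, I will apply Proposition~\ref{P:BrionSpherique} to obtain a Levi subgroup $L$ of $P = P_X$ together with a closed $L$-stable subvariety $Z \subseteq X\setminus\Delta_X$ such that the multiplication map $R_u(P)\times Z\to X\setminus\Delta_X$ is an isomorphism, $Z$ is a toric variety under $T_Z := L/[L,L]$, and each $G$-orbit meets $Z$ in a single $L$-orbit. Since toroidality guarantees that $X\setminus\Delta_X$ meets every $G$-orbit, and $X$ is smooth, $Z$ is a smooth toric slice whose $T_Z$-orbit structure is in bijection with the $G$-orbit structure on $X$. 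Conditions (1) and (2) for $X$ then transfer from the corresponding classical statements for smooth toric varieties (orbit closures are smooth, and each one is the intersection of the invariant divisors containing it), via the product decomposition with $R_u(P)$. Condition (3) likewise reduces to the toric statement, since the normal space to a torus orbit in a smooth toric variety decomposes into weight lines on which the stabilizer torus acts with an open orbit; extending by the affine space $R_u(P)$ preserves this property.

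For the converse, let $X$ be a smooth complete regular $G$-variety with open orbit $\Omega$. The first and decisive step is to prove that $X$ is spherical. I would fix a closed $G$-orbit $Y \simeq G/P$ (which exists by completeness) and a point $y \in Y$. By condition (3) the parabolic $P = G_y$ acts with a dense orbit on the normal space $N_y = T_yX/T_yY$; passing to a Luna slice $S$ through $y$, whose existence is guaranteed by smoothness and reductivity of a Levi of $P$, one transports this density to show that $B\cap P$ acts on $S$ with an open orbit. Since $G\cdot S$ is open in $X$ and saturation by $R_u(P)$ preserves density, $B$ inherits an open orbit on $X$, establishing sphericality. Toroidality is then a formal consequence of conditions (2) and (3) combined: any $B$-stable prime divisor containing a $G$-orbit $Y'$ would by (2) be forced to appear as one of the $G$-stable codimension-one orbit closures cutting out $Y'$, contradicting the definition of a color. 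The main obstacle is this sphericality step, and the detailed slice argument underlying it can be found in~\cite{BDP90} and~\cite{Ginsburg}; we will cite these references rather than redo the construction.
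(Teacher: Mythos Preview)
The paper's own proof is simply a citation to \cite[Proposition~2.2.1]{BienBrion}, so you are supplying substantially more than the paper does. Your forward direction (toroidal spherical $\Rightarrow$ regular) is essentially correct: Proposition~\ref{P:BrionSpherique} reduces conditions (1)--(3) to the smooth toric slice $Z$, and the $G$-translates of $X\setminus\Delta_X$ cover $X$, so the local verification globalizes.

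The converse sketch, however, has two genuine gaps. First, Luna's \'etale slice theorem requires the stabilizer $G_y$ itself to be reductive; at a point of a closed orbit $G/P$ in a complete $G$-variety the stabilizer is the parabolic $P$, which is not reductive, so the ``Luna slice'' you invoke does not exist as stated. The correct tool is the local structure theorem for $G$-actions (Brion--Luna--Vust, or Knop's version), which produces a $B$-stable affine open set isomorphic to $R_u(P^-)\times S$ with $S$ an $L$-variety; this is what is actually used in \cite{BDP90} and \cite{BienBrion}, so your deferral to those references is reasonable even if your description of the mechanism is inaccurate. Second, your deduction of toroidality from condition~(2) is wrong: condition~(2) says that each orbit closure $\overline{Y'}$ is the intersection of the $G$-stable divisors containing it, but this in no way prevents a further $B$-stable, non-$G$-stable divisor $D$ from also containing $Y'$. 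Nothing forces $D$ to appear among those $G$-stable divisors. Toroidality is not a formal consequence of (2) and (3) alone; in Bien--Brion it comes through the equivalence of regularity with log-homogeneity (or ``pseudo-freeness'') of the action, together with the structure of the normal bundle along the boundary, which then feeds into the local structure theorem to exclude colors from the boundary.
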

\begin{proof}
See~\cite[Proposition 2.2.1]{BienBrion}.
\end{proof}

Let us also mention that Brion has computed the intersection numbers of 
the smooth toroidal completions (hence, of the regular completions) of the spherical homogeneous spaces,
see~\cite{Brion98}.

\begin{Remark}
It follows from the definition of a regular $G$-variety that a $G$-orbit closure in a regular $G$-variety is a regular $G$-variety. 
\end{Remark}



We continue with setting up more notational conventions. 
1) ``$X$ is a homogeneous $H$-variety.'' means that 
$H$ acts on $X$ morphically, and $X$ is a homogeneous variety of the form $G/K$, 
where $G$ is an algebraic group, which may or may not be equal to $H$.
In the case when $G$ equals $H$, we will say that ``$X$ is an $H$-homogeneous variety.'' 
2) When we refer to \emph{the} action of a subgroup of $G$ on a partial flag variety $G/Q$ 
(or a subvariety of $G/Q$) we are always referring to the action by left multiplication.

Let $Q$ and $P$ denote two parabolic subgroups in $G$. 
\begin{Lemma}\label{L:1}
Let $L$ and $L'$ be two Levi subgroups in $P$. 
If the action of $L$ on $G/Q$ is spherical, then so is the action of $L'$.
\end{Lemma}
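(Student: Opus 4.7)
The plan is to exploit the conjugacy of Levi complements inside a parabolic subgroup. Recall from the structure theory of parabolic subgroups that any two Levi subgroups $L$ and $L'$ of $P$ are conjugate by a (unique) element of the unipotent radical $R_u(P)$, so we may write $L' = pLp^{-1}$ for some $p \in R_u(P) \subseteq P$. The strategy is then to push the open Borel orbit for $L$ forward by the automorphism of $G/Q$ given by left translation by $p$, and identify the result as an open Borel orbit for $L'$.

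Concretely, I would first fix a Borel subgroup $B_L \subseteq L$ and a point $x \in G/Q$ such that the orbit $B_L \cdot x$ is open in $G/Q$; the existence of such data is precisely the sphericality of the $L$-action. Setting $B_{L'} := p B_L p^{-1}$, one checks (using that Borel subgroups are sent to Borel subgroups under conjugation in a reductive group) that $B_{L'}$ is a Borel subgroup of $L'$. Left multiplication by $p$ defines a $G$-equivariant automorphism $\mu_p: G/Q \rightarrow G/Q$, hence the image $\mu_p(B_L \cdot x) = p \cdot (B_L \cdot x)$ is again open in $G/Q$. The identity
$$
B_{L'} \cdot (p \cdot x) \;=\; (p B_L p^{-1}) \cdot (p \cdot x) \;=\; p \cdot (B_L \cdot x)
$$
then shows that the $B_{L'}$-orbit through $p \cdot x$ is open in $G/Q$, which is precisely the statement that $L'$ acts spherically.

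There is no serious obstacle in this argument; the only nontrivial ingredient is the conjugacy of Levi complements of $P$, which is a standard fact from the structure theory of algebraic groups. In effect, the lemma merely says that the property of being a spherical action on $G/Q$ depends only on the $G$-conjugacy class of the acting subgroup, restricted to the Levi factors of a fixed parabolic.
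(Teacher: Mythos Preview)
Your proof is correct and follows exactly the paper's approach: the paper's own proof is the one-line observation that $L$ and $L'$ are conjugate by an element of $R_u(P)$, and you have simply supplied the details. One minor slip worth fixing: left multiplication $\mu_p$ on $G/Q$ is not $G$-equivariant for the left $G$-action (unless $p$ is central), but that is irrelevant---you only use that $\mu_p$ is an automorphism of varieties, hence takes open sets to open sets.
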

\begin{proof}
This follows from the fact that $L$ and $L'$ are conjugates 
by an element from the unipotent radical of $P$. 
\end{proof}

A closely related observation is the following. 

\begin{Lemma}\label{L:2}
Let $L$ be a Levi subgroup of $P$, and let $g$ be an element from $G$. 
If the action of $L$ on $G/Q$ is spherical, then so is the action of 
any Levi subgroup of $gPg^{-1}$. 
\end{Lemma}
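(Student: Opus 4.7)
The plan is to reduce the statement in two moves: first use Lemma~\ref{L:1} to reduce to the particular Levi subgroup $gLg^{-1}$ of $gPg^{-1}$, and then use an equivariant translation isomorphism on $G/Q$ to transport the $L$-sphericity of $G/Q$ to $gLg^{-1}$-sphericity.

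First I would observe that since conjugation by $g$ is an automorphism of $G$ carrying parabolic subgroups to parabolic subgroups and Levi decompositions to Levi decompositions, the subgroup $gLg^{-1}$ is a Levi subgroup of $gPg^{-1}$. By Lemma~\ref{L:1} applied to the parabolic $gPg^{-1}$, it suffices to prove that one Levi subgroup of $gPg^{-1}$ acts spherically on $G/Q$, and for this it suffices to consider the specific Levi subgroup $gLg^{-1}$.

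Next I would introduce the left translation map
\[
\phi : G/Q \longrightarrow G/Q, \qquad xQ \longmapsto g^{-1}xQ,
\]
which is an isomorphism of algebraic varieties. For any $\ell \in L$ and any $xQ \in G/Q$ one checks that
\[
\phi\bigl((g\ell g^{-1})\cdot xQ\bigr) \;=\; g^{-1}(g\ell g^{-1})x Q \;=\; \ell\cdot \phi(xQ),
\]
so $\phi$ intertwines the left multiplication action of $gLg^{-1}$ on the source with the left multiplication action of $L$ on the target, via the isomorphism $gLg^{-1} \xrightarrow{\sim} L$ given by $h \mapsto g^{-1}hg$. Consequently, $\phi$ carries $gLg^{-1}$-orbits in $G/Q$ bijectively onto $L$-orbits in $G/Q$, and it maps Borel subgroups of $gLg^{-1}$ to Borel subgroups of $L$ under the same identification.

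Finally, I would conclude as follows: by hypothesis a Borel subgroup $B_L \subseteq L$ has an open orbit on $G/Q$; then $g B_L g^{-1}$ is a Borel subgroup of $gLg^{-1}$, and by the equivariance of $\phi$ it has an open orbit on $G/Q$ as well. Hence $gLg^{-1}$ acts spherically on $G/Q$, and by the reduction in the first paragraph, so does every Levi subgroup of $gPg^{-1}$. The argument involves no real obstacle; the only care needed is the bookkeeping in verifying that $\phi$ is equivariant with respect to the correct identification of groups, which is the one routine computation above.
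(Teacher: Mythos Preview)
Your proof is correct and follows essentially the same route as the paper's: reduce via Lemma~\ref{L:1} to the particular Levi $gLg^{-1}$ of $gPg^{-1}$, and then transport sphericity along the left translation by $g^{-1}$ on $G/Q$. The paper's proof is a one-line version of exactly this, phrased as ``composing the action of $L$ by conjugation by $g^{-1}$''; you have simply made the equivariance computation explicit.
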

\begin{proof}
Clearly, $gLg^{-1}$ is a Levi subgroup of $gPg^{-1}$. 
Composing the action of $L$ by conjugation by $g^{-1}$ shows that 
$gL g^{-1}$ acts spherically on $G/Q$. The rest of the proof follows from Lemma~\ref{L:1}.
\end{proof}

In view of Lemmas~\ref{L:1} and~\ref{L:2}, we can always assume that both of the subgroups $P$ and $Q$ 
are standard parabolic subgroups with $B\subseteq P\cap Q$. 
We will make another reduction argument.
Let $L$ be a Levi subgroup of $P$ such that the action of $L$ on $G/Q$ is spherical. 
Note that the action of the center of $G$ on $G/Q$ is trivial, and the action of $L$ on $G/Q$ 
is induced from that of $G$. 
So, whenever it is convenient for our purposes, without loss of generality, 
we will assume that $G$ is a connected semisimple group.

By~\cite[Lemma 5.3]{AvdeevPetukhov}, $G/Q$ is a spherical $L$-variety 
if and only if the diagonal action of $G$ on $G/P\times G/Q$ is spherical. 
As Littelmann showed in~\cite{Littelmann}, the study of such diagonal actions is very useful in representation theory. 
For $G\in \{\mbf{SL}_n,\mbf{Sp}_n\}$, Magyar, Zelevinsky, Weyman~\cite{MWZ1, MWZ2} classified all
parabolic subgroups $P_1,\dots, P_k\subset G$ such that the variety $\prod_{i=1}^r G/P_i$ has only finitely 
many diagonal $G$-orbits. For $P_1=B$, this classification amounts to the classification of 
spherical varieties of the form $\prod_{i=2}^r G/P_i$ with respect to the diagonal $G$-actions. 
By Stembridge's work~\cite{Stembridge}, we know the complete 
classification of the diagonal spherical actions of $G$ on the products of the form $G/P\times G/Q$.
\begin{Remark}
Let us assume that both of $P$ and $Q$ are properly contained in $G$. Then 
by using Stembridge's list we see that a Levi subgroup $L$ of $P$ can act spherically on $G/Q$ 
only if the simple factors of the semisimple part of $G$ avoid the types $\text{E}_8,\text{F}_4$, and $\text{G}_2$. 
\end{Remark}



\section{Horospherical Flag Varieties}\label{S:3}

In this section, we review briefly the work of Pasquier 
on the smooth projective horospherical varieties of Picard number 1. 
Then we will present our classification result on the horospherical partial flag varieties.
Let us start with the definition of horospherical varieties. 
\begin{Definition}
Let $X$ be a $G$-variety. $X$ is called {\em horospherical} if the stabilizer of a point in a 
general position contains a maximal unipotent subgroup of $G$. 
\end{Definition}

Let $G$ be a connected reductive group, let $T$ be a maximal torus in $G$,
and let $B$ be a Borel subgroup of $G$ such that $T\subset B$. 
We will denote by $\Gamma$ the Dynkin diagram of $(G,B,T)$. 

Let $S$ denote the set of simple roots that is determined by $(G,B,T)$,
and let $\Lambda$ and $\Lambda^+$ denote the character group of $B$ and 
the subsemigroup of dominant characters, respectively. 

For $\alpha \in S$, we denote by $\omega_\alpha$ the 
corresponding fundamental weight in $\Lambda^+$.
If the elements of $S$ are ordered $\alpha_1,\dots, \alpha_\ell$, then 
we write $\omega_1,\dots, \omega_\ell$ for the corresponding fundamental weights.
We will denote by $P(\omega_\alpha)$ the maximal parabolic subgroup of $G$
such that $B\subset P(\omega_\alpha)$ and $\omega_\alpha$ is a character of $P(\omega_\alpha)$.

Let $G/H$ be a horospherical homogeneous space, and let $P$ 
denote the normalizer of $H$ in $G$. Then $G/H\to G/P$ is a torus bundle of rank $n$. 
Without loss of generality we assume that $H$ contains $U$, the unipotent radical of $B$, 
therefore, we assume that $B\subset P$. Then there is a 
subset $I \subset S$ such that $P= B W_I B$, where $W_I$ is the parabolic subgroup 
of $W$ that is generated by the simple reflections corresponding to the elements of $I$.  
This $I$ is also given by 
$
I= \{ \alpha \in S:\ \text{ $\omega_i$ is not a character of $P$}\}.
$

\begin{Definition}
Let $G/H$ be a horospherical homogeneous space. 
If it exists, we denote by $X^1$ the unique smooth projective embedding of $G/H$ with 
Picard number 1. If $X^1$ exists, we will call $G/H$ a {\em special homogeneous space}, 
and we will call $S\setminus I$ {\em the set of colors of $X^1$}.
\end{Definition}

According to Pasquier~\cite[Section 1.2.2]{Pasquier}, if $X^1$ is not a projective space,
then the horospherical subgroup $H$ is given by the kernel of the character 
$
\omega_\alpha - \omega_\beta + \chi : P(\omega_\alpha) \cap P(\omega_\beta) \longrightarrow \C^*,
$
where $\alpha$ and $\beta$ are the two colors for $X^1$, and $\chi$ is a character of the 
center of $G$. 
Moreover, assuming that $G$ is a simple group, Pasquier shows in~\cite[Section 1.3]{Pasquier} that if $X^1$ is a 
homogeneous space but not a projective space, then the triplet 
$(\Gamma,\alpha,\beta)$ is given by the one of the following:
\begin{enumerate}[label=(\alph*)]
\item \label{Pas:a} $(\textrm{A}_m, \alpha_1,\alpha_m)$, with $m\geq 2$;
\item \label{Pas:b} $(\textrm{A}_m, \alpha_i,\alpha_{i+1})$, with $m\geq 3$ and $i\in \{1,\dots, m-1\}$;
\item \label{Pas:c} $(\textrm{D}_m, \alpha_{m-1},\alpha_{m})$, with $m\geq 4$.
\end{enumerate}
It is shown in~\cite[Propositions 1.8--1.10]{Pasquier} that $X^1$ is isomorphic to the closure
in $\PP(V(\omega_\alpha)\oplus V(\omega_\beta))$ of $G\cdot [v_\alpha+v_\beta]$, 
where $V(\omega_\alpha)$
(resp. $V(\omega_\beta)$) stands for the irreducible $G$-module with highest weight $\omega_\alpha$
(resp. the irreducible $G$-module with highest weight $\omega_\beta$). 
Here, $v_\alpha$ and $v_\beta$ are the corresponding highest weight vectors.
The homogeneous spaces for the cases~\ref{Pas:a},~\ref{Pas:b}, and~\ref{Pas:c}
are given by $\mbf{SO}_{2m+2}/ P(\omega_1)$,
$\mbf{Gr}(i+1,m+2)$ ($1\leq i \leq m-1$), and $\mbf{Spin}(2m+1)/P(\omega_m)$, respectively.

\begin{Remark}\label{R:twoorbits}
Notice that in all of these three cases the two closed $G$-orbits in $X^1$ are given by 
$G\cdot [v_\alpha]$ and $G\cdot [v_\beta]$. 
To see this, we observe that $[v_\alpha]$ lies in the closure of 
the orbit $G\cdot [v_\alpha + v_\beta]$. Indeed, this is easy to see
by acting on $[v_\alpha+v_\beta]$ 
with an appropriate 1-parameter subgroup, and by taking limit.
Let us demonstrate this procedure explicitly in the case of \ref{Pas:a},
the other cases being similar. 

In this case, $X^1$ turns out to be isomorphic to a quadric hypersurface 
in \hbox{$\PP(\C^{m+1}\oplus (\C^{m+1})^*)$}, whose dimension is $2m$. 
The two fundamental irreducible representations are given by $V(\omega_1) \cong \C^{m+1}$ and 
$V(\omega_m) \cong (\C^{m+1})^*$, where the latter is the dual of the first one.
Let $v_1$ and $v_m = v_1^*$ be two highest weight vectors from $V(\omega_1)$ and $V(\omega_m)$,
respectively. Then we can choose $\lambda : \C^* \to T$ so that 
$$
z\cdot v_1:= \lambda (z) \cdot v_1 = z v_1\  \text{ and } \ 
z\cdot v_m := \lambda(z) \cdot v_m =- \frac{1}{z} v_m
$$
for all $z\in \C^*$. Thus,
$z \cdot [v_1+ v_m ] = [z v_1 - 1/z v_m]$. But observe that  
$$
\lim_{z\to \infty}  [z v_1 - 1/z v_m] = \lim_{z\to \infty}  [z v_1] = [v_1]. 
$$
Thus, $[v_1] \in \overline{G\cdot [v_1+v_m]}$.
In particular, $G\cdot [v_1] \subset X^1$. Since $[v_1]$ is the highest weight vector,
its stabilizer in $G$ is a parabolic subgroup, hence the orbit is closed. 
The containment $G\cdot [v_m] \subset X^1$ is seen in a similar way. 
In particular, there are only three $G$-orbits in $X^1$; there are two closed $G$-orbits and an open orbit. 
This is true for the homogeneous space $X^1$ of cases \ref{Pas:b} and \ref{Pas:c} as well. 



\end{Remark}

Let us assume that $G$ is a connected semisimple group, $P$ and $Q$ are two standard parabolic subgroups in $G$,
$L$ is a Levi subgroup of $P$, and let $X$ denote the partial flag variety $G/Q$. 
We assume that $X$ is an $L$-spherical variety. 
 Under these assumptions, the main result of this section is the following: 
\begin{Theorem}\label{T:horospherical}
If $L$ acts horospherically on $X$, then one of the following holds true:
\begin{enumerate}
\item $X$ is a projective space;
\item $X$ is a homogeneous $L$-variety; 
\item $X$ is an $Q'/Q$-bundle over a homogeneous $L$-variety of the form $\prod_{i=1}^r X_i$, where 
$Q'$ is a maximal parabolic subgroup containing $Q$, and $X_i$ ($1\leq i \leq r$) is one of the varieties 
\begin{enumerate}
\item $\mbf{SO}_{2m+2}/ P(\omega_1)$ with $m\geq 2$,
\item $\mbf{Gr}(i+1,m+2)$ ($1\leq i \leq m-1$) with $m\geq 3$, 
\item $\mbf{Spin}(2m+1)/P(\omega_m)$ with $m\geq 4$.
\end{enumerate}
Furthermore, in this case, the simple factors of
$L$ are either of type $\text{A}_m$ ($m\geq 2$), or of type $\text{D}_m$ ($m\geq 4$).
\end{enumerate}
\end{Theorem}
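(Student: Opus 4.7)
My strategy is to apply Pasquier's classification (recalled in Section~\ref{S:3}) of smooth projective horospherical varieties of Picard number one. First I would reduce to the Picard number one case by projecting $X$ to $G/Q'$ for a suitable maximal parabolic $Q' \supseteq Q$, then identify the possible isomorphism types using Pasquier, and finally assemble the product and bundle structure by decomposing $G$ into its simple factors.

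Suppose first that $Q$ is a maximal standard parabolic, so that $\mathrm{Pic}(X)$ has rank one. Since $X = G/Q$ is smooth, projective, and horospherical for $L$ by hypothesis, Pasquier's theorem gives exactly three alternatives for the $L$-isomorphism type of $X$: either (i) $X \cong \PP^N$ for some $N$; or (ii) $X$ is $L$-homogeneous, i.e., the open $L$-orbit coincides with $X$; or (iii) $X$ is isomorphic to one of the three non-trivial Pasquier varieties associated to a triple $(\Gamma_L,\alpha,\beta)$ drawn from the list~\ref{Pas:a}--\ref{Pas:c}. Alternative (i) yields case $(1)$ of the theorem, and alternative (ii) yields case $(2)$. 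In alternative (iii), the ambient $G$-homogeneity of $X$ forces the underlying variety to coincide with $\mbf{SO}_{2m+2}/P(\omega_1)$, $\mbf{Gr}(i+1,m+2)$, or $\mbf{Spin}(2m+1)/P(\omega_m)$, and $L$ must contain a corresponding simple factor of type $\mathrm{A}_m$ or $\mathrm{D}_m$; this is case $(3)$ with $r=1$ and trivial fiber $Q'/Q$.

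When $Q$ is not maximal, choose a standard maximal parabolic $Q'$ with $Q \subsetneq Q'$ and consider the canonical $G$-equivariant projection $\pi := p_{{\scriptscriptstyle Q,Q'}} \colon X \to G/Q'$, whose fibers are isomorphic to $Q'/Q$. Horosphericity descends through $L$-equivariant surjections: since $\mathrm{Stab}_L(x) \subseteq \mathrm{Stab}_L(\pi(x))$ for every $x \in X$, the maximal unipotent subgroup $U_L$ of $L$ contained in $\mathrm{Stab}_L(x)$ for a generic $x$ is also contained in $\mathrm{Stab}_L(\pi(x))$, while a generic point of $X$ projects to a generic point of $G/Q'$ because the image of the open $L$-orbit of $X$ is an irreducible $L$-stable constructible set of full dimension in the spherical $L$-variety $G/Q'$. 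Hence $L$ acts horospherically on $G/Q'$, and the Picard number one analysis from the previous paragraph applies. To extract the product structure in case $(3)$, I decompose $G$ into simple factors as $G = \prod_k G_k$ with $Q' = \prod_k Q'_k$ accordingly; each factor $G_k/Q'_k$ falls under alternative (i), (ii), or (iii) for the corresponding simple factor $L_k$ of $L$, and the base $G/Q' \cong \prod_k G_k/Q'_k$ appears as the product $\prod_{i=1}^r X_i$ of Pasquier varieties, with any projective-space or $L_k$-homogeneous factors absorbed into the homogeneity of the base.

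The main difficulty is two-fold. First, one must carefully justify the generic-point transfer through $\pi$ so that the maximal unipotent in the stabilizer of a generic point of $X$ genuinely lies in the stabilizer of a generic point of $G/Q'$; this is a standard fact but requires verifying that the image of the open $L$-orbit in $X$ is precisely the open $L$-orbit in $G/Q'$. Second, after decomposing across the simple factors of $G$ and $L$, one must ensure that no factor produces an unlisted exceptional horospherical type; here the Stembridge-based observation recorded in the preliminaries (that spherical Levi actions preclude the types $\mathrm{E}_8$, $\mathrm{F}_4$, and $\mathrm{G}_2$ in $G$) is used to restrict the possible Pasquier triples, which in turn forces the simple factors of $L$ appearing in case $(3)$ to be exactly of type $\mathrm{A}_m$ with $m \geq 2$ or $\mathrm{D}_m$ with $m \geq 4$, matching the concluding assertion of the theorem.
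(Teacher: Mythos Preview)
Your proposal is correct and follows essentially the same route as the paper: project along $p_{Q,Q'}$ to a maximal parabolic quotient of Picard number one, observe that horosphericity descends along this $L$-equivariant surjection, pass to simple factors, and invoke Pasquier's list~\ref{Pas:a}--\ref{Pas:c}. Your write-up is in fact more careful than the paper's (which is quite terse) in justifying the descent of horosphericity and in spelling out the product/bundle structure; the only superfluous ingredient is the appeal to Stembridge at the end, since Pasquier's three cases already pin the relevant simple factors of $L$ to type $\mathrm{A}_m$ ($m\geq 2$) or $\mathrm{D}_m$ ($m\geq 4$) without any further exclusion argument.
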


\begin{proof}
Let us assume that $X$ is neither a projective space nor a homogeneous $L$-variety. 
Then we will show that $X$ is a homogeneous $L$-variety. 
To this end, let $Q'$ denote a maximal parabolic subgroup containing $Q$, and let $X'$ denote the generalized Grassmannian
$G/Q'$. Then the canonical morphism $p_{Q,Q'} : X\to X'$ is a $G$-equivariant, hence $L$-equivariant, projection onto $X'$. 
In fact, $p_{Q,Q'}$ is a fiber bundle morphism with fibers isomorphic to $Q'/Q$.  

It is now easy to see that, since the $L$-action on $X$ is horospherical, the action of $L$ on $G/Q'$ is horospherical as well.
We now restrict our attention to the simple factors of $L$, so, without loss of generality we assume that $L$ is simple. 
In this case, since $Q'$ is a maximal parabolic subgroup of $G$, we know also that the Picard number of $X'$ is 1. 
Therefore, we are in one of the three situations,~\ref{Pas:a},~\ref{Pas:b}, or~\ref{Pas:c}, 
that are described by Pasquier.
In particular, we see that $L$ is of type $\text{A}_m$ ($m\geq 2$) or of type $\text{D}_m$ ($m\geq 4$). 
This finishes the proof of our theorem. 
\end{proof}

\section{Some Toroidal Flag Varieties}\label{S:4}

We proceed with our previous notation: 
$G$ is a connected reductive group, $P$ and $Q$ are two parabolic subgroups in $G$,
$L$ is a Levi subgroup of $P$, $X:=G/Q$. 
Furthermore, we assume that $L$ acts spherically on $X$. 
In addition, we will assume that $X$ is a horospherical $L$-variety. 
We fix a Borel subgroup $B$ of $G$ such that $B\subset Q\cap P$.
Let $T$ be a maximal torus in $B$, and let $W$ denote the associated Weyl group of $(G,T)$. 
Then $B$ determines a generating system of simple reflections $S$ in $W$. 
We will denote by $W_Q$ the Weyl group of $(L_Q,T)$, where $L_Q$ is a Levi subgroup 
in $Q$ such that $T\subset L_Q$. In this case, $W_Q$ is a subgroup of $W$. 
Let $w$ denote the element with maximal length in $W$,
and let $w_{0,Q}$ denote the element of maximal length in $W_Q$. 
Let $B_{L}^-$ be a Borel subgroup in $L$ such that $B_{L}^-id Q$ is open in $G/Q$,
and let $B_{L}$ denote the Borel subgroup in $L$ such that $B_{L} := L \cap B$.
Then $B_{L} \subset \textrm{Stab}_{L}(idQ)$. 
\vspace{.25cm}

Recall that toroidal $G$-varieties are those spherical varieties whose colors do not contain any $G$-orbit. 
Equivalently, they are the spherical $G$-varieties $X$ such that 
every $B$-stable divisor of $X$ which contains a $G$-orbit is $G$-stable.

The main result of this section is the following.

\begin{Theorem}\label{T:Pic1}
We assume that $X$ is a horospherical $L$-variety of the form $X=G/Q$. 
In addition, we assume that the Picard number of $X$ is 1.
Then $X$ is toroidal if and only if one of the following conditions are satisfied:
\begin{enumerate}
\item $X$ is a projective space;
\item $X$ is an $L$-homogeneous variety.  
\end{enumerate}
\end{Theorem}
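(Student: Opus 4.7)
The plan is to invoke Theorem~\ref{T:horospherical}, which splits horospherical $L$-varieties $X = G/Q$ into three cases: $X$ is a projective space, $X$ is $L$-homogeneous, or $X$ is a $Q'/Q$-bundle whose base is built from the three Pasquier exceptional horospherical varieties $\mbf{SO}_{2m+2}/P(\omega_1)$, $\mbf{Gr}(i+1,m+2)$, and $\mbf{Spin}(2m+1)/P(\omega_m)$. Under the Picard number $1$ hypothesis the third case becomes heavily constrained, and the equivalence reduces to a case-by-case check.

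For the reverse direction, case $(2)$ is essentially immediate: the sole $L$-orbit on an $L$-homogeneous variety is $X$ itself, which cannot be contained in any proper divisor, so $X$ is vacuously toroidal. For case $(1)$, where $X = \PP^n$, I would describe the $L$-orbit stratification directly. Since Picard $1$ forces every prime divisor to sit in $|\mathcal{O}(1)|$, the $B_L$-stable prime divisors can be enumerated through the weight decomposition of $V(\omega_1)$ under $L$; each such divisor turns out either to be $L$-stable or not to contain any closed $L$-orbit, so no color contains an $L$-orbit.

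For the forward direction I argue by contraposition: assume $X$ falls into case $(3)$. Picard $1$ forces $Q$ to be a maximal parabolic of $G$, so any parabolic $Q'$ with $Q \subseteq Q'$ must equal $Q$ or $G$; taking $Q' = G$ would collapse the base to a point, contradicting the nontrivial structure asserted in case $(3)$, so $Q' = Q$, the fibers $Q'/Q$ are points, and $X$ is isomorphic to a single Pasquier exceptional horospherical variety. By Remark~\ref{R:twoorbits}, such a variety sits inside $\PP(V(\omega_\alpha) \oplus V(\omega_\beta))$ with exactly three $G$-orbits: an open orbit, together with the two closed highest-weight orbits $G \cdot [v_\alpha]$ and $G \cdot [v_\beta]$. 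Since the closed orbits have codimension at least $2$, there is no $G$-stable prime divisor on $X$, so every $B$-stable prime divisor is a color; Pasquier's explicit description identifies two such colors, each containing one of the closed orbits. The corresponding statement for the $L$-action follows because $L$ contains a Borel of the relevant simple factor, which makes the $L$-orbit stratification compatible with the $G$-orbit stratification and preserves the color/closed-orbit incidence. Hence $X$ has a color containing an $L$-orbit, contradicting toroidality.

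The main obstacle I expect is the projective space subcase of the reverse direction, since the Levi $L$ need not act transitively on $\PP^n$ and so its orbit structure must be described by hand; one then has to verify that every $B_L$-stable prime divisor either is $L$-stable or fails to contain any closed $L$-orbit. A secondary technical point is the passage from $G$-orbits to $L$-orbits on the Pasquier exceptional varieties in the forward direction, which should reduce to the fact that $L$ contains a Borel of the relevant simple factor, so the same colors appear in both stratifications.
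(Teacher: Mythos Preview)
Your forward-direction strategy is the paper's, but with a misidentification of groups. In Remark~\ref{R:twoorbits} the symbol $G$ is Pasquier's notation for the group acting horospherically; in Theorem~\ref{T:Pic1} that role is played by $L$ (or a simple factor of it), while the paper's ambient $G$ acts transitively on $X=G/Q$. So there are not ``three $G$-orbits'' on $X$, and your ``passage from $G$-orbits to $L$-orbits'' is a phantom step: the three orbits in Remark~\ref{R:twoorbits} are already $L$-orbits, and no transfer is needed. Once this is fixed, your argument and the paper's coincide in substance: in each of Pasquier's cases (a)--(c) the two closed $L$-orbits have codimension at least~$2$, so no divisor of $X$ is $L$-stable; meanwhile some $B_L$-stable divisor contains a closed $L$-orbit, violating toroidality. (Your route through Theorem~\ref{T:horospherical} followed by ``Picard~$1$ forces $Q'=Q$'' is a harmless detour; since Picard number~$1$ is assumed from the outset, Pasquier's classification applies to $X$ directly.)

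The paper is more concrete than you are about exhibiting the offending divisor. Rather than citing Pasquier's abstract description of colors, it observes that the minimal $L$-orbit in $G/Q$ is the Schubert variety $LQ/Q \cong PQ/Q$, which therefore sits inside the unique Schubert divisor of $G/Q$. That divisor is $B_L$-stable because $B_L \subset B$, so toroidality would force it to be $L$-stable; the case-by-case dimension count (showing the closed $L$-orbits are never divisors) then produces the contradiction. This bypasses the need to locate Pasquier's colors explicitly inside $G/Q$.

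For the reverse direction you are actually more cautious than the paper, which declares $(\Leftarrow)$ ``clear'' without further comment. Your worry about the projective-space case is reasonable: when $L$ does not act transitively on $\PP^n$ one must genuinely check the $B_L$-stable hyperplanes against the closed $L$-orbits, and the paper supplies no such argument.
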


\begin{proof}
First of all, let us note that under our hypothesis, $X$ is the special $L$-equivariant embedding 
of a homogeneous space $L/J$, where $J$ is a horospherical subgroup. 
In other words, in Pasquier's notation, we have $X = X^1$. 

Now, the implication ($\Leftarrow$) is clear, so we will show that if $X$ is toroidal, then
either $X$ is a projective space, or $X$ is homogeneous for the action of $L$.
Towards a contradiction, let us assume that $X$ is toroidal but $X$ is neither a projective 
space, nor it is an $L$-homogeneous variety. 
Thus, according to Pasquier, we are in one the three situations \ref{Pas:a}, \ref{Pas:b}, or \ref{Pas:c} 
that are described in Section~\ref{S:3}.
In other words, $L/J \cong L\cdot [v_\alpha + v_\beta]$ and $X^1$ is the 
closure of the latter orbit in $\PP(V(\omega_\alpha) \oplus V(\omega_\beta))$. 
We already observed in Remark~\ref{R:twoorbits} that 
$\PP(\omega_\alpha)$ and $\PP(\omega_\beta)$ are the two closed $L$-orbits in $X^1=G/Q$.

We now observe an additional fact that the minimal $L$-orbit in $X^1=G/Q$ is a Schubert variety. 
Indeed, 
\[
LQ/Q \cong L /L\cap Q \cong P/P\cap Q \cong PQ/Q.
\]
Since this minimal $L$-orbit is a Schubert variety, it is contained in the unique Schubert 
divisor of $G/Q$. Let $B_{L}$ denote the Borel subgroup of $L$ as defined at the end of 
the first paragraph in this section. Then $B_{L}$ stabilizes the $B$-orbits (Schubert cells) 
in $G/Q$. In particular, the unique Schubert divisor of $G/Q$ is $B_{L}$-stable. 
It follows from our assumption of toroidalness that the unique Schubert divisor of $G/Q$ is $L$ stable.
We will show that this is not true for the cases under consideration.

Case (a) (See~\cite[Proposition 1.8]{Pasquier}):
We already mentioned that $X^1$ is a quadric hypersurface in $\PP(\C^{m+1}\oplus (\C^{m+1})^*)$,
so $\dim X^1=2m$. 
Here, our group $L$ is isomorphic to $\mathbf{SL}_{m+1}$, and the fundamental representations 
are given by $V(\omega_1) \cong \C^{m+1}$
and $V(\omega_m)\cong (\C^{m+1})^*$. Thus we see that the two (closed) $L$-orbits
are $m$-dimensional projective spaces, $\PP(\C^{m+1})$ and $\PP((\C^{m+1})^*)$. 
Since $m < 2m-1$, no divisor in $X^1$ is $L$-stable.

Case (b) (See~\cite[Proposition 1.9]{Pasquier}):
In this case, we know that $X^1 \cong  \mbf{Gr}(i+1,m+2)$ ($i\in \{1,\dots, m-1\})$, 
so $\dim X^1= (m-i+1)(i+1)$, and we know that 
$V(\omega_i) \cong \bigwedge^i \C^{m+1}$, so 
$\dim G\cdot [v_i] = \dim \mbf{Gr}(i, m+1) = (m-i+1)i$. 
Also, $\dim G\cdot [v_{i+1}] = \dim \mbf{Gr}(i+1, m+1) = (m-i)(i+1)$,
Since $i<m$ and $3\leq m$, we see that neither $G\cdot [v_i]$ or $G\cdot [v_{i+1}]$
is a divisor in $X^1$. In other words, no divisor in $X^1$ is $L$-stable.

Case (c) (See~\cite[Proposition 1.10]{Pasquier}):
In this case, we know that $X^1$ is isomorphic to the 
spinor variety $\mbf{Spin}(2m+1)/P(\omega_m)$.
It is also known as the odd orthogonal Grassmannian,
or the Lagrangian Grassmannian.  
The dimension of $X^1$ is given by $\dim X^1 = {m+1 \choose 2}$, see~\cite[Corollary 6.2.4.4]{LR}.
The two highest weight vectors under consideration, that are $v_{m-1}$ and $v_{m}$,
are actually the two highest weight vectors for the half-spin representations of $\mbf{Spin}(2m)$.
We view $\mbf{Spin}(2m)$ inside $\mbf{Spin}(2m+1)$ as the stabilizer subgroup of a 
suitable basis vector for the standard matrix representation of $\mbf{SO}(2m+1)$, 
which is lifted to $\mbf{Spin}(2m+1)$. In fact, we can choose this particular basis 
vector in such a way that it does not appear in the highest weight vectors $v_{m-1}$ and $v_{m}$. 
Therefore, the stabilizer in $\mbf{Spin}(2m+1)$ of the half-spin representation highest weight vectors
is equal to its intersection with the subgroup $\mbf{Spin}(2m)$ in $\mbf{Spin}(2m+1)$. 
From this we see that 
the dimensions of $\overline{\mbf{Spin}(2m+1)\cdot v_{m-1}}$ and 
$\overline{\mbf{Spin}(2m+1)\cdot v_{m}}$ can be computed by replacing 
$\mbf{Spin}(2m+1)$ with $\mbf{Spin}(2m)$. Then we get two copies of the 
even Grassmannian, which is of dimension ${m\choose 2}$, see~\cite[Corollary 7.2.4.4]{LR}.
Since, for $m\geq 4$, ${m\choose 2} +1 < {m+1 \choose 2}$, we see that 
no divisor in $X^1$ is $L$-stable. This finishes the proof of our theorem.

\end{proof}

\section{Levi Orbits in Schubert Varieties}
\label{S:LeviStable}

In this section we have general results about $L$ orbits in Schubert varieties. 
We begin with fixing a pair $(T,B)$ of maximal torus $T$ and a Borel subgroup 
$B$ containing $T$ in $G$.

Let $Q$ be a standard parabolic subgroup of $G$, 
$X_{wQ}$ ($w\in W^Q$) be a Schubert variety in $G/Q$.
Let $P \subset G$ denote the stabilizer of $X_{wQ}$ in $G$.
Then $P$ is a standard parabolic subgroup as well. 
In this case, any Levi subgroup of $P$ is a maximal
Levi that acts on $X_{wQ}$. 
Let us also assume for convenience that $T\subset L$. 
Set 
$$
B_L:= L\cap B.
$$
Then $B_L$ is a Borel subgroup of $L$.

By Definition~\ref{D:colorandtoroidal}, assuming that 
$X_{wQ}$ is a spherical $L$-variety, 
to see whether $X_{wQ}$ is toroidal or not,
one needs to analyze its $B_L$-stable prime divisors. 

\begin{Remark}\label{R:well known}
Since $B_L\subset B$, any Schubert divisor
is automatically $B_L$-stable. In light of this, we will focus
on describing those prime Schubert divisors that are not $L$-stable, but contain an $L$-orbit.
This will allow us, in Corollary~\ref{C:toroidalGrassNesc}, to provide a set of necessary conditions 
for a Schubert $L$-variety to be toroidal.
\end{Remark}


Towards this end, we first prove a general fact about 
the actions of Levi subgroups
on Schubert varieties. Recall our notation from before; 
unless otherwise stated, 
$Q$ stands for a standard parabolic subgroup in $G$.
First, we have a definition.

\begin{Definition}\label{D:1head}
Let $X_{wQ}$ ($w\in W^Q$) be a Schubert variety in $G/Q$.  
Assume that $X_{wQ}$ is stable under a standard Levi subgroup 
$L$ of a standard parabolic subgroup $P$ of $G$.
An element $\theta\in W^Q$ with $\theta \leq w$ is called 
a {\em degree 1 head} for $w$ if the Schubert subvariety 
$X_{\theta Q}$ is $L$-stable. In particular $w$ is a degree 1 head
for itself. 
\end{Definition}

\begin{Proposition}
\label{P:LorbitCriterion}
We preserve the notation from Definition~\ref{D:1head}.
Let $\tau \in W^Q$ with $\tau \leq w$. 
Then the Schubert variety $X_{\tau}$ contains an $L$-orbit 
if and only if there exists a degree 1 head $\theta$ such that $\theta \leq \tau$.
\end{Proposition}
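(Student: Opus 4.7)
The plan is to address the two directions of the biconditional separately, with the backward direction being essentially by definition and the forward direction carrying the real content.

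For $(\Leftarrow)$, I will observe that if $\theta \leq \tau$ is a degree 1 head, then the Schubert variety $X_{\theta Q}$ is $L$-stable and sits inside $X_{\tau Q}$; since any $L$-stable variety is a union of $L$-orbits, any such orbit provides the required $L$-orbit inside $X_{\tau Q}$.

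For $(\Rightarrow)$, suppose an $L$-orbit $L \cdot x$ is contained in $X_{\tau Q}$. My plan is to promote this to a full $P$-orbit sitting inside $X_{\tau Q}$. The key observation I intend to exploit is that since $P$ is a standard parabolic, $B \subseteq P$ and the unipotent radical $R_u(P)$ is contained in the unipotent radical $U$ of $B$, so $R_u(P) \subseteq B$. Combined with the Levi decomposition $P = R_u(P) L$ and the $B$-stability of $X_{\tau Q}$, this should yield $P \cdot x = R_u(P)(L \cdot x) \subseteq B \cdot X_{\tau Q} = X_{\tau Q}$, and therefore $\overline{P \cdot x} \subseteq X_{\tau Q}$. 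Because $\overline{P \cdot x}$ is an irreducible, $B$-stable, closed subvariety of $G/Q$, it must be a single Schubert variety $X_{\theta Q}$ for some $\theta \in W^Q$; and being $P$-stable, it is automatically $L$-stable. The inclusions $X_{\theta Q} \subseteq X_{\tau Q} \subseteq X_{wQ}$ then give $\theta \leq \tau \leq w$, so $\theta$ is the desired degree 1 head.

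I do not foresee any serious obstacle in executing this plan; the only non-cosmetic point to check carefully is the containment $R_u(P) \subseteq B$, which is what allows the $B$-stability of $X_{\tau Q}$ to interact with the Levi decomposition and upgrade the given $L$-orbit to a $P$-orbit sitting inside $X_{\tau Q}$.
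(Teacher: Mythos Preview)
Your proposal is correct and follows essentially the same route as the paper: both argue that if $L\cdot x\subseteq X_{\tau Q}$ then in fact $P\cdot x\subseteq X_{\tau Q}$, using the Levi decomposition together with $R_u(P)\subseteq B$, and then identify $\overline{P\cdot x}$ with an $L$-stable Schubert subvariety. Your version is marginally more direct, writing $P=R_u(P)L$ so that $P\cdot x=R_u(P)(L\cdot x)\subseteq B\cdot X_{\tau Q}=X_{\tau Q}$ immediately, whereas the paper writes $P=L\,U_P$ and then invokes the semidirect product to rewrite $\ell u$ as $u'\ell'$; the content is identical.
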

\begin{proof}
One direction is immediate, if $X_{\tau}$ contains an $L$-stable Schubert subvariety then clearly it must contain an $L$-orbit. For the other direction, let us suppose that $X_{\tau}$ contains an $L$-orbit. Let $x$ be a point in this orbit. Then we claim that the $P$-orbit through the point $x$ is contained in $X_{\tau}$. Any element in the orbit may be written as $\ell u x$ for $\ell \in L$ and $u \in U_P$. As $P$ is a semidirect product of $L$ and $U_P$ we know that there exists an $\ell' \in L$ and $u' \in U_P$ such that $\ell u x = u' \ell' x$. Since $l' x \in X_{\tau}$ and $U_P \subset B$ it follows that $\ell u x = u' \ell' x \in X_{\tau}$. Hence both $P \cdot x$ and $\overline{P \cdot x}$ are contained in $X_{\tau}$. But $\overline{P \cdot x}$ is a $B$-stable subvariety of $X_{\tau}$ which implies, via the Bruhat decomposition and the fact that $P$ is connected, that $\overline{P \cdot x} = X_{\gamma}$ for some $\gamma \in W^Q$. The left hand side of this equality is $P$-stable and contained in $X_{\tau}$, and hence so is $X_{\gamma}$. This concludes our proof, since this gives that $X_{\gamma}$ is an $L$-stable Schubert subvariety of $X_{\tau}$ (note it is possible that $\gamma = \tau$); equivalently $\gamma$ is a degree 1 head less than or equal $\tau$.
\end{proof}

\begin{Corollary}
\label{C:Lhomogenous}
We preserve the notation from Proposition~\ref{P:LorbitCriterion}.
An $L$-stable Schubert variety $X_{w Q}$ in $G / Q$ is either 
$L$-homogeneous or contains an $L$-stable Schubert subvariety $X_{\tau Q}$ with $X_{\tau Q}\neq X_{w Q}$
\end{Corollary}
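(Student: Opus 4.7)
The plan is to prove the contrapositive: if every $L$-stable Schubert subvariety of $X_{wQ}$ equals $X_{wQ}$ itself, then $X_{wQ}$ is $L$-homogeneous.

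First I would recycle the construction in the proof of Proposition~\ref{P:LorbitCriterion}. For any $x \in X_{wQ}$, using $P = L \cdot U_{P}$ together with $U_{P} \subset B$, the argument given there shows that $P \cdot x \subseteq X_{wQ}$ and that $\overline{P \cdot x}$ is a $B$-stable, irreducible, closed subvariety of $X_{wQ}$, hence a Schubert subvariety $X_{\gamma Q}$; being $P$-stable, it is in particular $L$-stable. The standing hypothesis then forces $X_{\gamma Q} = X_{wQ}$, so the orbit $P \cdot x$ has closure equal to the whole irreducible variety $X_{wQ}$, and is therefore open in $X_{wQ}$. Since an irreducible variety contains at most one open orbit under any group action, and $x$ was arbitrary, we conclude that $P$ acts transitively on $X_{wQ}$.

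It remains to upgrade $P$-transitivity to $L$-transitivity, and this is the step I expect to be the mildly subtle one. Fix $x_{0} \in X_{wQ}$ and set $R := \text{Stab}_{P}(x_{0})$. Since $X_{wQ}$ is projective, $R$ is a parabolic subgroup of $P$ and $X_{wQ} \cong P/R$. The key observation is that $R$ must contain the unipotent radical $U_{P}$ of $P$: indeed $U_{P} = \ker(P \twoheadrightarrow L)$, and $R/U_{P}$ is a parabolic subgroup of the reductive quotient $L$. Consequently
$$
LR \;\supseteq\; L \cdot U_{P} \;=\; P,
$$
so the composition $L \hookrightarrow P \twoheadrightarrow P/R$ is surjective, which yields the $L$-transitivity of $X_{wQ}$. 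The real content of the argument is packaged into Proposition~\ref{P:LorbitCriterion}; the only additional ingredient is $U_{P} \subseteq R$, and this is what allows $P$-homogeneity to descend to $L$-homogeneity.
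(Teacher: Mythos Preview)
Your argument is correct and follows essentially the same two-step strategy as the paper: (i) closures of $P$-orbits in $X_{wQ}$ are $P$-stable Schubert subvarieties, so the absence of a proper one forces $P$-transitivity; (ii) $P$-homogeneity implies $L$-homogeneity. You run it as a contrapositive while the paper argues directly, but the content is the same.

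One small point on step (ii): your sentence ``indeed $U_{P} = \ker(P \twoheadrightarrow L)$, and $R/U_{P}$ is a parabolic subgroup of $L$'' reads as if it presupposes $U_{P} \subseteq R$, which is exactly what you are trying to justify. The clean reason is that $R$, being parabolic in $P$, contains a Borel subgroup of $P$, and every Borel of $P$ contains $U_{P}$. The paper avoids this detour entirely by choosing the specific base point $x_{0} = idQ$: since $B$ fixes $idQ$ and $B \subset P$, the stabilizer $K = \mt{Stab}_{P}(idQ)$ visibly contains $B$, hence $U_{P}$, and the surjection $L \twoheadrightarrow P/K$ is immediate.
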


\begin{proof}
We begin by proving that $X_{w Q}$ is $L$-homogeneous if and only if it is $P$-homogeneous. One direction is obvious, for the other we assume that $X_{w Q}$ is $P$-homogeneous. In this case, there is a single $P$-orbit and this orbit must contain the point $id Q$. Let $K$ be the stabilizer subgroup of $P$ at this point, then $X_{w Q}\cong P / K$. Clearly $K$ must contain $B$, as $id Q$ is a $B$-fixed point. But this implies that the inclusion map $L \hookrightarrow P$ descends to a surjection $L \twoheadrightarrow P / K$. Hence there exists a subgroup $J$ in $L$ such that $L / J \cong P / K \cong X_{w Q}$,
where $J=K\cap L$. 

Now suppose that we have an $L$-stable Schubert variety $X_{w Q}$. If it is $L$-homogeneous we are done. If it is not $L$-homogeneous, then by the above argument we know it is not $P$-homogeneous. Thus there are at least two $P$-orbits, and there must exist a $P$-orbit that is not dense. The closure of this non-dense $P$-orbit is $B$-stable and thus is a Schubert variety $X_{\tau Q}$. The fact that the $P$-orbit was not dense implies that $X_{\tau Q} \neq X_{w Q}$. Further, this Schubert variety is $P$-stable and hence $L$-stable.
\end{proof}

\begin{Remark}\label{R:uniqueminimal}
Our previous result implies that the minimal $L$-stable Schubert varieties are all $L$-homogeneous. 
In fact, more is true; for a fixed $L$ there is a unique minimal $L$-stable, $L$-homogeneous Schubert variety. 
It is precisely the $L$-orbit through $id Q$ and is isomorphic to $L / L \cap Q  \cong P / P \cap Q$.
\end{Remark}

\section{Schubert Divisors of Grassmann Schubert Varieties}
\label{S:ToroidalinGrassmann}

In this section, we specialize to type A flag varieties. 
Let $\mbf{T}_n$ and $\mbf{B}_n$ denote, respectively, 
the maximal diagonal torus and the Borel subgroup
consisting of $n\times n$ upper triangular matrices in $\mbf{GL}_n$. 
Let $(\mbf{\Phi},\mbf{\Delta})$ denote the root system and 
the set of simple roots determined by the data of 
$(\mbf{GL}_n,\mbf{B}_n,\mbf{T}_n)$.
The Weyl group of $\mbf{GL}_n$ is the group of 
permutations of $\{1,\dots, n\}$, and we denote it by $\mbf{S}_n$. 
Suppose that 
$$
\mbf{\Delta}:=\{\alpha_1,\dots, \alpha_{n-1}\}.
$$
is the set of simple roots with the standard (Bourbaki) ordering.
Let $\mbf{Q}_d$ denote the maximal (standard) 
parabolic subgroup corresponding to $\mbf{\Delta} \setminus \{\alpha_d\}$
for some simple root $\alpha_d$ from $\mbf{\Delta}$.
In this case, we denote the Weyl group of $\mbf{Q}_d$ by 
$\mbf{S}_{n,d}$; it is isomorphic to $\mbf{S}_d\times \mbf{S}_{n-d}$. 
The set of minimal length left coset representatives 
of $\mbf{S}_{n,d}$ in $\mbf{S}_n$,
which we denote by $\mbf{S}_n^d$, consists of permutations 
$w= (w_1\dots w_n)$ such that 
$$
w_1 <\dots < w_d, \ w_{d+1}< \dots < w_n.
$$

Note that in all cases except for the identity we have $w_d > w_{d+1}$. For the rest of this section we will assume that $w$ is not the identity, as this will make our results simpler to state, and for the identity the questions we are interested in become trivial. When $w$ is the identity, the associated Schubert variety is a point space and thus is a spherical (resp. toroidal) $L$-variety if and only if it is $L$-stable.

Under the assumptions of the previous paragraph, 
we have the precise combinatorial 
description of degree 1 heads from \cite{HodgesLakshmibai}.
Since it is useful for our purposes, we briefly review this development.

Let $I=\{\alpha_{i_1},\dots, \alpha_{i_s}\}$ be a subset of $\mbf{\Delta}$ 
with $\Comp{I}=\{\alpha_{j_1},\dots, \alpha_{j_r}\}$ its complement in $\mbf{\Delta}$. 
For the simplicity of notation, we 
identify $I$ with the set of indices $\{ i_1,\dots, i_s\}$ and $\Comp{I}$ with $\{ j_1,\dots, j_r\}$
and we apply this convention to all subsets of $\mbf{\Delta}$.
Let $\mbf{L}$ denote the standard Levi factor of the
(standard) parabolic subgroup $\mbf{P}$ whose simple roots are $I$. 
Also determined by $I$ and $\Comp{I}$ is a set partition of $\{1,\dots, n\}$,
whose subsets are denoted by 
$\mathrm{Block}_{\mbf{L},1},\ldots,\mathrm{Block}_{\mbf{L},r+1}$, 
where the maximal element of each $\mathrm{Block}_{\mbf{L},k}$ 
is $j_k$ (or $n$ for $\mathrm{Block}_{\mbf{L},r+1}$). 
For example, for $I = \{ 1,3,4,7 \}$, $\Comp{I} = \{ 2,5,6 \}$, and $n=8$ we have 
$\mbf{L} = \mbf{GL}_2 \times \mbf{GL}_3 \times \mbf{GL}_1 \times \mbf{GL}_2 \subset \mbf{GL}_8$. 
Then $\mathrm{Block}_{\mbf{L}, 1}= \{1,2\},\mathrm{Block}_{\mbf{L}, 2} = \{3,4,5\}$, 
$\mathrm{Block}_{\mbf{L}, 3}=\{6\}$, and $\mathrm{Block}_{\mbf{L}, 4}=\{7, 8\}$. 
We define $b_{\mbf{L}}$ to be the \emph{number of blocks}, that is $|\Comp{I}| + 1$.
\\

{\em Notation: The one-line notation for a permutation $\tau$ 
in $\mbf{S}_n$ is the parenthesized list of numbers $(\tau_1\dots \tau_n)$, where 
$\tau_i$ ($1\leq i \leq n$) denotes the value of $\tau$ at $i$ viewed as a function on $\{1,\dots, n\}$.  
If $\tau$ is a permutation with one-line notation $\tau= (\tau_1\dots \tau_n)$, 
then we denote by $\tau^{(d)}$ the sequence $(\tau_1,\dots,\tau_d) \uparrow$, where $\uparrow$ indicates that 
the sequence has been reordered so that it is increasing.}
\\

Let $w, \theta \in \mbf{S}_n^d$ be two permutations 
such that the Schubert variety 
$X_{w\mbf{Q}_d}$ is $\mbf{L}$-stable and $\theta$ is a degree 1 head. 
In~\cite{HodgesLakshmibai}, the requirement that 
$\theta$ is a degree 1 head is shown 
to be equivalent to the following combinatorial criterion. 

\begin{Proposition}\label{P:maxConditionHeads}
Let $\theta \in W^Q$ with $\theta \leq w$. Then $\theta$ is a degree 
1 head if and only if $\theta^{(d)} \cap \mathrm{Block}_{\mbf{L},k}$ 
is a maximal collection of elements in $\mathrm{Block}_{\mbf{L},k}$ 
for all $1 \leq k \leq b_{\mbf{L}}$.
\end{Proposition}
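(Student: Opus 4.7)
The plan is to reduce $\mbf{L}$-stability of $X_{\theta\mbf{Q}_d}$ to a condition on simple root subgroups and then translate that condition into the combinatorics of the $d$-subset $\theta^{(d)} \subseteq \{1,\ldots,n\}$. Since $\mbf{L}$ is the standard Levi factor of $\mbf{P}$ with simple roots $I$, it is generated by $\mbf{T}_n$ together with the root subgroups $U_{\pm\alpha_i}$ for $\alpha_i \in I$. Because every Schubert variety is $\mbf{B}_n$-stable (in particular $\mbf{T}_n$-stable), $\mbf{L}$-stability of $X_{\theta\mbf{Q}_d}$ is equivalent to stability under each minimal parabolic $P_{\alpha_i} := \mbf{B}_n \cup \mbf{B}_n s_i \mbf{B}_n$ for $\alpha_i \in I$.

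Next, I would translate $P_{\alpha_i}$-stability into a condition on $\theta^{(d)}$. Under the identification of $\mbf{S}_n^d$ with the set of $d$-subsets of $\{1,\ldots,n\}$ sending $\theta$ to $\theta^{(d)} = \{\theta_1 < \cdots < \theta_d\}$, the Bruhat order corresponds to componentwise order on sorted subsets. A standard computation (see, e.g., \cite{LR}) shows that $P_{\alpha_i}\cdot X_{\theta \mbf{Q}_d}$ is the Schubert variety indexed by the subset obtained from $\theta^{(d)}$ by replacing $i$ by $i+1$ whenever $i \in \theta^{(d)}$ and $i+1 \notin \theta^{(d)}$, and by leaving $\theta^{(d)}$ unchanged otherwise. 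Consequently, $P_{\alpha_i}$-stability is equivalent to the implication
\[
i \in \theta^{(d)} \ \Longrightarrow \ i+1 \in \theta^{(d)}.
\]

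Finally, I would assemble the two reductions. By construction of the blocks, an index $i$ lies in $I$ precisely when $i$ and $i+1$ lie in the same $\mathrm{Block}_{\mbf{L},k}$, since the block boundaries are exactly the elements of $\Comp{I}$. Thus $\mbf{L}$-stability of $X_{\theta\mbf{Q}_d}$ is equivalent to the requirement that, for every block $\mathrm{Block}_{\mbf{L},k}$ and every pair $i,i+1$ of consecutive integers inside it, $i \in \theta^{(d)}$ forces $i+1 \in \theta^{(d)}$. Since each $\mathrm{Block}_{\mbf{L},k}$ is an interval of consecutive integers, this is exactly the statement that $\theta^{(d)}\cap \mathrm{Block}_{\mbf{L},k}$ is an upward-closed subset of that interval, i.e., consists of the top $|\theta^{(d)}\cap \mathrm{Block}_{\mbf{L},k}|$ elements of $\mathrm{Block}_{\mbf{L},k}$. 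This is precisely the maximal-collection condition in the proposition.

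The only step requiring care is the combinatorial description of $P_{\alpha_i}\cdot X_{\theta\mbf{Q}_d}$ at the level of $d$-subsets; this is classical for the Grassmannian but is the one place where the passage between the Weyl-group and subset pictures needs to be verified. Everything else is a formal consequence of the generation of $\mbf{L}$ by $\mbf{T}_n$ and the $U_{\pm\alpha_i}$ ($\alpha_i \in I$) together with the description of the blocks in terms of $\Comp{I}$.
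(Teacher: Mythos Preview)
Your argument is correct. The paper itself does not supply a proof of this proposition: it states the criterion and attributes the equivalence to \cite{HodgesLakshmibai}, so there is no in-paper proof to compare against. Your route---reducing $\mbf{L}$-stability to $\mbf{P}$-stability (using $\mbf{B}_n$-stability of Schubert varieties and $\mbf{P}=\mbf{L}\cdot R_u(\mbf{P})$ with $R_u(\mbf{P})\subset\mbf{B}_n$), then to stability under each minimal parabolic $P_{\alpha_i}$ for $\alpha_i\in I$, and finally translating $P_{\alpha_i}$-stability into the implication $i\in\theta^{(d)}\Rightarrow i+1\in\theta^{(d)}$---is exactly the standard mechanism underlying the cited result, and your final identification of ``upward-closed in each block'' with ``maximal collection in each block'' is the intended reading of the statement.

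The one place you flagged as needing care, the description of $P_{\alpha_i}\cdot X_{\theta\mbf{Q}_d}$, is indeed routine: $P_{\alpha_i}\cdot X_{\theta\mbf{Q}_d}=X_{\theta'\mbf{Q}_d}$ where $\theta'$ is the larger of $\theta$ and $s_i\theta$ in Bruhat order (after passing to minimal coset representatives), and left multiplication by $s_i$ swaps the values $i$ and $i+1$. So $s_i\theta>\theta$ in $W^{\mbf{Q}_d}$ precisely when $i\in\theta^{(d)}$ and $i+1\notin\theta^{(d)}$, giving your implication. No gap here.
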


Note that if the Schubert variety $X_{w\mbf{Q}_d}$ is smooth, 
then $w=(w_1\dots w_n)$ has the form 
$$
w= (1\dots  p\,\,  m\,\, m+1 \dots  m+d-p+1\,\, w_{d+1}\dots w_n),
$$
where $p \geq 0$ and $m > p + 1$;  
if $p$ is 0, then the initial entry of the permutation starts with $m$.
In this notation, the maximal standard Levi subgroup that acts on
$X_{w\mbf{Q}_d}$ is given by $\mbf{L}_{\mt{max}}:=\mbf{L}_{I_w}$, 
where $\Comp{I_w}$ contains $p$ if it is greater 
than $0$ and $m+d-p+1$ if it is less than $n$. 
In particular, this implies, in the 
most general case, that $\mathrm{Block}_{\mbf{L}_{\mt{max}},1} = \{ 1,\ldots,p\}$, 
$\mathrm{Block}_{\mbf{L}_{\mt{max}},2} = \{ p+1,\ldots,m+d-p+1\}$, 
and $\mathrm{Block}_{\mbf{L}_{\mt{max}},3} = \{ m+d-p+2 , \ldots , n\}$.

\begin{Theorem}\label{T:toroidal in a grassmannian}
Let $X_{w\mbf{Q}_d}$ be a smooth Schubert variety
and let $\mbf{L}_{\mt{max}}$ be the (maximal) Levi 
as defined in the previous paragraph. Then 
the Schubert divisors in $X_{w\mbf{Q}_d}$ 
do not contain any $\mbf{L}_{\mt{max}}$-orbits.
\end{Theorem}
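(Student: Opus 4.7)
The plan is to apply Proposition~\ref{P:LorbitCriterion} in its contrapositive form, which reduces the theorem to showing that no Schubert divisor $X_{\tau\mbf{Q}_d}$ of $X_{w\mbf{Q}_d}$ has a degree~1 head $\theta$ below it. Since $X_{w\mbf{Q}_d}$ is smooth, the associated partition is a rectangle, so a unique outer corner yields a unique Schubert divisor. A direct computation will show that $\tau$ is obtained from $w$ by decreasing the value at position $p+1$ from $m$ to $m-1$, i.e.,
$$
\tau^{(d)} = \{1, 2, \dots, p,\ m-1,\ m+1, m+2, \dots, m+b-1\}, \qquad b := d - p,
$$
so that sorted, $\tau^{(d)}$ has $(p+1)$-st entry equal to $m - 1$.

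Next I would suppose, toward a contradiction, that a degree~1 head $\theta \in W^{\mbf{Q}_d}$ satisfies $\theta \leq \tau$. Because the Bruhat--Chevalley order on $\mbf{GL}_n/\mbf{Q}_d$ is the componentwise order on sorted $d$-tuples, the inequality $\theta^{(d)} \leq \tau^{(d)}$ forces $\theta^{(d)}_i = i$ for $i \leq p$, so $\mathrm{Block}_{\mbf{L}_{\mt{max}},1} \subseteq \theta^{(d)}$; the $(p+1)$-st inequality then reads $\theta^{(d)}_{p+1} \leq m - 1$. By Proposition~\ref{P:maxConditionHeads}, the remaining $b$ elements of $\theta^{(d)}$ form top portions of $\mathrm{Block}_{\mbf{L}_{\mt{max}},2}$ and $\mathrm{Block}_{\mbf{L}_{\mt{max}},3}$ of sizes $j_2$ and $j_3$ with $j_2 + j_3 = b$. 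Applying the same criterion to $w$, whose intersection $w^{(d)} \cap \mathrm{Block}_{\mbf{L}_{\mt{max}},2} = \{m, \dots, m+b-1\}$ is the required top portion, one sees that $w_d = m+b-1$ is the top element of $\mathrm{Block}_{\mbf{L}_{\mt{max}},2}$ and that every element of $\mathrm{Block}_{\mbf{L}_{\mt{max}},3}$ strictly exceeds $w_d$.

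These observations produce the contradiction at once: if $j_2 \geq 1$, the smallest element of the top $j_2$ of $\mathrm{Block}_{\mbf{L}_{\mt{max}},2}$ equals $w_d - j_2 + 1 = m + b - j_2 \geq m$, violating $\theta^{(d)}_{p+1} \leq m - 1$; and if $j_2 = 0$ then $j_3 = b \geq 1$ and $\theta^{(d)}_{p+1} \geq w_d + 1 > m - 1$, the same contradiction. The main obstacle is not logical but purely bookkeeping: verifying that the degenerate cases $p = 0$ and $\mathrm{Block}_{\mbf{L}_{\mt{max}},3} = \emptyset$ reduce uniformly to the single inequality at position $p+1$, after which Proposition~\ref{P:LorbitCriterion} finishes the argument.
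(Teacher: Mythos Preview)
Your proof is correct and follows the same strategy as the paper: reduce via Proposition~\ref{P:LorbitCriterion} to showing that no degree~1 head lies below the (unique) Schubert divisor, then invoke Proposition~\ref{P:maxConditionHeads}. The paper's execution is slightly more economical---rather than a case analysis on $(j_2,j_3)$, it observes at once that for \emph{every} $\tau\leq w$ the block-intersection cardinalities $|\tau^{(d)}\cap\mathrm{Block}_{\mbf{L}_{\mt{max}},k}|$ are forced to equal $(p,\,d-p,\,0)$, so $w$ is the only degree~1 head in the entire interval $[\mathrm{id},w]$.
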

\begin{proof}
It is not difficult to argue in this case that 
the only degree 1 head is $w$ itself. 
To see this, first, observe that any element $\tau \leq w$ is of the form
\begin{center}
$\tau=(1\dots p\,\, j_1 \dots j_{d-p}\,\, w_{d+1}\dots w_n)$
\end{center}
with each $j_b \leq m+d-p+1$. 
This means that the number of elements in 
$\tau^{(d)} \cap \mathrm{Block}_{\mbf{L}_{\mt{max}},1}$ 
is always $p$, the number of elements in 
$\tau^{(d)} \cap \mathrm{Block}_{\mbf{L}_{\mt{max}},2}$ 
is always $d-p$, and the number of elements in 
$\mathrm{Block}_{\mbf{L}_{\mt{max}},3}$ is always 0. 
As a degree 1 head must be a maximal collection 
of elements in each of these sets, the head is 
uniquely determined by the number of entries 
in each block, and thus $w$ is in fact the unique 
degree 1 head.

Now, the uniqueness property implies that for any Schubert 
divisor $X_{\tau}$ there does not exist a degree 
1 head less than or equal to $\tau$. 
Using Proposition~\ref{P:LorbitCriterion} 
we conclude that there are no $\mbf{L}_{\mt{max}}$-orbits 
contained in $X_{\tau\mbf{Q}_d}$.
\end{proof}

\begin{Corollary}\label{C:wonderfulgrassmannian}
Let $X_{w\mbf{Q}_d}$ be a smooth Schubert variety in 
$\mbf{GL}_n/\mbf{Q}_d$. If $\mbf{L}_{\mt{max}}$ denotes the 
standard Levi factor of the stabilizer $\mbf{P}_{\mt{max}}:=\mt{Stab}_{\mbf{GL}_n}(X_{w\mbf{Q}_d})$,
then $X_{w\mbf{Q}_d}\cong \mbf{L}_{\mt{max}}/\mbf{P}$,
where $\mbf{P}$ is a maximal parabolic 
subgroup of $\mbf{L}_{\mt{max}}$. 
\end{Corollary}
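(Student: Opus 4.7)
The plan rests on three ingredients already in place: (i) the stronger statement, extracted from the proof of Theorem~\ref{T:toroidal in a grassmannian}, that $w$ is the \emph{unique} degree $1$ head below itself; (ii) the dichotomy in Corollary~\ref{C:Lhomogenous}; and (iii) the explicit block description of $\mbf{L}_{\mt{max}}$ given just before Theorem~\ref{T:toroidal in a grassmannian}.

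First, I would observe that the proof of Theorem~\ref{T:toroidal in a grassmannian} establishes slightly more than its statement: because any $\tau\le w$ distributes its one-line notation among the blocks of $\mbf{L}_{\mt{max}}$ with exactly the same cardinalities as $w$ does, Proposition~\ref{P:maxConditionHeads} forces $w$ to be the only degree $1$ head $\le w$. By Definition~\ref{D:1head}, this amounts to saying that no proper Schubert subvariety of $X_{w\mbf{Q}_d}$ is $\mbf{L}_{\mt{max}}$-stable.

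Next, I would apply Corollary~\ref{C:Lhomogenous} to $X_{w\mbf{Q}_d}$: this Schubert variety is $\mbf{L}_{\mt{max}}$-stable by the very definition of $\mbf{P}_{\mt{max}}$, yet it has no proper $\mbf{L}_{\mt{max}}$-stable Schubert subvariety, so it must be $\mbf{L}_{\mt{max}}$-homogeneous. Invoking Remark~\ref{R:uniqueminimal}, the orbit $\mbf{L}_{\mt{max}}\cdot id\,\mbf{Q}_d$ is then forced to coincide with $X_{w\mbf{Q}_d}$, yielding
\[
X_{w\mbf{Q}_d} \;\cong\; \mbf{L}_{\mt{max}}\bigl/\bigl(\mbf{L}_{\mt{max}} \cap \mbf{Q}_d\bigr).
\]
What remains is to identify $\mbf{P} := \mbf{L}_{\mt{max}} \cap \mbf{Q}_d$ as a maximal parabolic of $\mbf{L}_{\mt{max}}$. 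Using the block description of $\mbf{L}_{\mt{max}}$ recorded just before Theorem~\ref{T:toroidal in a grassmannian}, $\mbf{L}_{\mt{max}}$ is the block-diagonal subgroup with blocks $\{1,\ldots,p\}$, $\{p+1,\ldots,m+d-p+1\}$, and $\{m+d-p+2,\ldots,n\}$ (suppressing any empty outer block when $p=0$ or when the last block is trivial). The first block lies wholly in $\{1,\ldots,d\}$, the third wholly in $\{d+1,\ldots,n\}$, while the middle block is the unique one straddling position $d$. Intersecting with $\mbf{Q}_d$, the stabilizer of $\langle e_1,\ldots,e_d\rangle$, leaves the two outer $\mbf{GL}$-factors untouched and cuts out from the middle factor the maximal parabolic that stabilizes the span of its first $d-p$ coordinate vectors; this omits exactly one simple root of $\mbf{L}_{\mt{max}}$, so $\mbf{P}$ is a maximal parabolic.

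The heavy lifting has already been done in Theorem~\ref{T:toroidal in a grassmannian}; the only genuinely delicate point in the argument above is the verification that the middle block is the unique straddling block, together with the boundary cases $p=0$ or $m+d-p+1=n$, where one outer factor may be absent but the straddling configuration and hence the maximality of $\mbf{P}$ persist. No deeper mathematical obstacle intervenes.
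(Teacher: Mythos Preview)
Your argument is correct. The deduction of $\mbf{L}_{\mt{max}}$-homogeneity is essentially the paper's: both rely on the fact, established inside the proof of Theorem~\ref{T:toroidal in a grassmannian}, that $w$ is the unique degree~1 head, and you simply make the passage through Corollary~\ref{C:Lhomogenous} and Remark~\ref{R:uniqueminimal} explicit where the paper just asserts transitivity.

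Where you genuinely diverge is in identifying $\mbf{P}=\mbf{L}_{\mt{max}}\cap\mbf{Q}_d$ as a maximal parabolic. The paper appeals to the classical fact that a smooth Schubert variety in a Grassmannian is itself a Grassmannian, whence the isotropy of any point is automatically a maximal parabolic of $\mbf{L}_{\mt{max}}$. You instead compute the intersection directly from the block description of $\mbf{L}_{\mt{max}}$, checking that the outer blocks lie entirely on one side of position~$d$ while the middle block straddles it, so that exactly one simple root (namely $\alpha_d$) is removed. Your route is more self-contained and makes the parabolic $\mbf{P}$ concrete; the paper's route is a one-line invocation of a well-known structural result. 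Both are sound, and your handling of the boundary cases $p=0$ and $m+d-p+1=n$ is adequate since the straddling of the middle block persists (the condition $m>p+1$ guarantees $p<d$ and $d+1$ lies in the middle block).
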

\begin{proof}
It follows from 
Theorem~\ref{T:toroidal in a grassmannian} that $\mbf{L}_{\mt{max}}$ 
acts transitively on $X_{w\mbf{Q}_d}$,
hence, $X_{w\mbf{Q}_d}$ is a homogenous variety for $\mbf{L}_{\mt{max}}$. 
Since smooth Schubert varieties 
in Grassmannians are Grassmann varieties, 
the stabilizer of a point $p\in X_{w\mbf{Q}}$ 
in $\mbf{L}_{\mt{max}}$ is a maximal parabolic subgroup. 
This proves the second claim. The first claim follows from the second. 
\end{proof}

Now we assume that $w$ is an arbitrary {\em Grassmann permutation} 
from $\mbf{S}_n^d$. 
In other words, $w$ has a single descent at the $d$-th position. 
Then, in one-line notation, $w$ has the form 
\begin{align}\label{A:grassmannperm}
w= (a_1\,\, a_1+1 \dots a_1+b_1\,\, a_2\,\, (a_2+1) \dots a_2+b_2\,\, 
\dots a_j\,\, (a_j+1) \dots a_j+b_j\,\, w_{d+1} \dots w_n),
\end{align}
where $a_\ell + b_\ell + 1 < a_{\ell+1}$ for $1 \leq \ell < j$.
If we write $w$ more briefly as in $w=(w_1\,\dots \,w_n)$, then 
the dimension of the Schubert variety $X_{w\mbf{Q}_d}$ is given by  
\begin{align*}
\dim X_{w \mbf{Q}_d} &= \sum_{i=1}^d (w_i-i),
\end{align*}
see~\cite[Section 1.1]{BrionLectures}.
Let $w_\ell$ denote $w_\ell := s_{\alpha_{a_\ell}} w$,
so that, in the one-line notation of (\ref{A:grassmannperm}), 
$w_\ell$ is obtained from $w$ by replacing a single $a_\ell$ by 
$a_\ell - 1$ for some $1 \leq \ell \leq j$ such that $a_\ell > 1$. 
Recall our convention that we identify the subsets of 
the set of simple roots $\mbf{\Delta}$ by the sets of indices
of the roots that they contain.

\begin{Theorem}\label{T:divisorsColors}
Let $\mbf{L}:= \mbf{L}_I$ ($I\subset \mbf{\Delta}$) 
be a standard Levi subgroup in $\mbf{GL}_n$ and let 
$w \in \mbf{S}_n^d$ be a (arbitrary) Grassmann permutation as in (\ref{A:grassmannperm}) 
such that the associated Schubert variety $X_{w\mbf{Q}_d}$ is $\mbf{L}$-stable.
Then any Schubert divisor of $X_{w\mbf{Q}_d}$ is of the form $X_{w_\ell \mbf{Q}_d}$ for some $1 \leq \ell \leq j$.
Furthermore, the Schubert divisor $X_{w_\ell \mbf{Q}_d}$ is $\mbf{L}$-stable 
(equivalently $w_\ell$ is a degree 1 head) if and only if $\Comp{I}$ contains $a_\ell - 1$.
\end{Theorem}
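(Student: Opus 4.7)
The plan is to split the argument into two parts corresponding to the two claims of the theorem: first, identify the Schubert divisors of $X_{w\mbf{Q}_d}$ combinatorially, and second, decide which of them are $\mbf{L}$-stable via Proposition~\ref{P:maxConditionHeads}. For the divisor identification, I would exploit the standard bijection between Grassmann permutations in $\mbf{S}_n^d$ and partitions $\lambda$ inside a $d\times(n-d)$ rectangle, under which the Bruhat order corresponds to partition containment and the Schubert divisors of $X_{w\mbf{Q}_d}$ correspond to removing a corner box from the partition $\lambda_w$ associated to $w$. A direct calculation shows that within the $\ell$-th run of $w$ the partition entries $\lambda_i$ are all equal, so the corners of $\lambda_w$ are in bijection with the runs of $w$, and removing the corner at the top of the $\ell$-th run translates to replacing the value $a_\ell$ by $a_\ell - 1$ in the one-line notation of $w$, which is exactly $w_\ell$. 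Checking that $w_\ell$ lies in $\mbf{S}_n^d$ with $\ell(w_\ell) = \ell(w) - 1$ is then immediate from the gap condition $a_{\ell - 1} + b_{\ell - 1} + 1 < a_\ell$ together with the observation that the value $a_\ell - 1$ appears in the tail $w_{d+1}\dots w_n$ of $w$.

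For the $\mbf{L}$-stability part, I would invoke Proposition~\ref{P:maxConditionHeads}: since $X_{w\mbf{Q}_d}$ is $\mbf{L}$-stable, the set $w^{(d)} \cap \mathrm{Block}_{\mbf{L},k}$ is a top segment (suffix) of $\mathrm{Block}_{\mbf{L},k}$ for every $k$. Combined with the observation that each $a_\ell - 1$ sits in the tail of $w$ rather than in $w^{(d)}$, this forces every run of $w$ to lie inside a single block and to fill the top of that block. Now fix $\ell$; the set $w_\ell^{(d)}$ differs from $w^{(d)}$ by removing $a_\ell$ and adjoining $a_\ell - 1$. If $a_\ell - 1 \in \Comp{I}$, then $a_\ell - 1$ is the maximum of the block $\mathrm{Block}_{\mbf{L},k-1}$ preceding the block $\mathrm{Block}_{\mbf{L},k}$ containing $a_\ell$; since $a_\ell - 1 \notin w^{(d)}$, the suffix $w^{(d)} \cap \mathrm{Block}_{\mbf{L},k-1}$ was necessarily empty, and after the swap it becomes $\{a_\ell - 1\}$, a valid suffix of size one. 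Meanwhile the intersection with $\mathrm{Block}_{\mbf{L},k}$ (which in this situation coincides with the $\ell$-th run $\{a_\ell,\ldots,a_\ell + b_\ell\}$) loses $a_\ell$ and becomes the suffix $\{a_\ell + 1,\ldots,a_\ell + b_\ell\}$. All other blocks are untouched, so $w_\ell$ is a degree 1 head. Conversely, if $a_\ell - 1 \notin \Comp{I}$, then $a_\ell - 1$ and $a_\ell$ lie in a common block, and after the swap the intersection with this block contains $a_\ell - 1$ but not $a_\ell$, violating the suffix condition; hence $w_\ell$ is not a degree 1 head.

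The principal obstacle is the bookkeeping in the second part: one must first translate the $\mbf{L}$-stability of $w$ into the rigid structural statement that every block meeting a run contains exactly that run at the top of the block. Once this rigidity is in place, the dichotomy between the two cases becomes essentially forced, since the only way for the single-entry swap $a_\ell \mapsto a_\ell - 1$ to preserve the suffix property in every block is to cross a block boundary, that is, to have $a_\ell - 1 \in \Comp{I}$.
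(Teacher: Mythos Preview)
Your approach matches the paper's: identify the Schubert divisors via standard Bruhat combinatorics on Grassmann permutations (you spell out the partition/corner-box picture, the paper just cites it as standard) and then decide $\mbf{L}$-stability by feeding the swap $a_\ell\mapsto a_\ell-1$ through Proposition~\ref{P:maxConditionHeads}.

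One over-claim to correct: the suffix condition on $w$ does \emph{not} force each run to lie inside a single block. For instance, with $n=10$, $d=5$, $w^{(d)}=\{3,4,5,6,7\}$ and blocks $\{1,2\},\{3,4\},\{5,6,7\},\{8,9,10\}$, the single run spans two blocks yet $w$ is still a degree~1 head. What the suffix condition \emph{does} force is that each $a_\ell+b_\ell$ is a block maximum (this is exactly what the paper extracts), and, in the case $a_\ell-1\in\Comp{I}$, that the block $\mathrm{Block}_{\mbf{L},k}$ beginning at $a_\ell$ is entirely contained in $w^{(d)}$ (hence is an initial segment of the $\ell$-th run, though possibly a proper one). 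This weaker structural fact is all your case analysis actually uses: removing $a_\ell$ from that block still leaves a suffix, and the other blocks behave as you describe. So the argument survives once the parenthetical ``(which in this situation coincides with the $\ell$-th run)'' is weakened accordingly.
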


\begin{proof}

The fact that any Schubert divisor is of the form $X_{w_\ell\mbf{Q}_d}$ 
is a standard result; it follows, for example, from the standard combinatorial 
description of the Bruhat order on Grassmann permutations in addition to the 
dimension formula for Schubert varieties in the Grassmannian. 
By Proposition~\ref{P:maxConditionHeads}, we know that a Schubert variety 
$X_{w_\ell\mbf{Q}_d}$ will be a degree 1 head if and only if 
$w_\ell^{(d)} \cap \mathrm{Block}_{\mbf{L},k}$ is a maximal 
collection of elements in $\mathrm{Block}_{\mbf{L},k}$ for all 
$1 \leq k \leq b_{\mbf{L}}$. Note that $w_\ell$ contains the value 
$a_\ell - 1$ but does not contain $a_\ell$. It immediately follows that if 
$X_{w_\ell \mbf{Q}_d}$ is $\mbf{L}$-stable then $a_\ell - 1$ must be the 
maximal element in some block if the maximality condition 
from Proposition~\ref{P:maxConditionHeads} is to hold. This is equivalent to $\Comp{I}$ 
containing $a_\ell - 1$. Now we suppose that $I$ contains $a_\ell - 1$. 
The fact that $\Comp{I}$ contains $a_k+b_k$ follows from the fact that $X_{w\mbf{Q}_d}$ is 
$\mbf{L}$-stable ($w$ is a degree 1 head) and the Proposition~\ref{P:maxConditionHeads} maximality 
condition. Thus it is not hard to see that any intersection 
$w_\ell^{(d)} \cap \mathrm{Block}_{\mbf{L},k}$ is a maximal collection of elements in 
$\mathrm{Block}_{\mbf{L},k}$
\end{proof}

\begin{Corollary}\label{C:GrassmannToroidal}
\label{C:toroidalGrassNesc} We preserve the notation from Theorem~\ref{T:divisorsColors}.
If the Schubert variety $X_{w\mbf{Q}_d}$ is toroidal, then one of the following two criteria is satisfied for
all Schubert divisors $X_{w_\ell\mbf{Q}_d}$:
\begin{enumerate}
\item $a_\ell - 1 \in \Comp{I}$, or 
\item $a_\ell - 1 \notin \Comp{I}$ and there exist no $L_I$-stable Schubert subvarieties in $X_{w_\ell\mbf{Q}_d}$.
\end{enumerate}
\end{Corollary}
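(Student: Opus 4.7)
The plan is to derive the corollary directly from the definition of toroidalness, together with the combinatorial classification of $\mbf{L}_I$-stable Schubert divisors in Theorem~\ref{T:divisorsColors} and the $\mbf{L}_I$-orbit criterion in Proposition~\ref{P:LorbitCriterion}; no new geometry is needed.

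First I would reformulate what toroidalness of the spherical $\mbf{L}_I$-variety $X_{w\mbf{Q}_d}$ says at the level of Schubert divisors. By Definition~\ref{D:colorandtoroidal}, toroidalness forbids any color, i.e.\ any $B_{\mbf{L}_I}$-stable prime divisor that fails to be $\mbf{L}_I$-stable, from containing an $\mbf{L}_I$-orbit. Since $B_{\mbf{L}_I}\subset B$, Remark~\ref{R:well known} ensures that every Schubert divisor $X_{w_\ell\mbf{Q}_d}$ of $X_{w\mbf{Q}_d}$ is automatically $B_{\mbf{L}_I}$-stable. Applying the toroidal condition divisor by divisor therefore yields the dichotomy: for each Schubert divisor $X_{w_\ell\mbf{Q}_d}$, either (a) $X_{w_\ell\mbf{Q}_d}$ is $\mbf{L}_I$-stable, or (b) $X_{w_\ell\mbf{Q}_d}$ is not $\mbf{L}_I$-stable and contains no $\mbf{L}_I$-orbit.

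Next I would translate this dichotomy via the two cited results. By Theorem~\ref{T:divisorsColors}, (a) is equivalent to $a_\ell - 1 \in \Comp{I}$, which is exactly condition~(1) of the corollary. In case (b), the condition $a_\ell - 1 \notin \Comp{I}$ supplies the first half of (2). For the second half, Proposition~\ref{P:LorbitCriterion} identifies ``$X_{w_\ell\mbf{Q}_d}$ contains an $\mbf{L}_I$-orbit'' with ``there is a degree $1$ head $\theta \leq w_\ell$'', which by Definition~\ref{D:1head} is in turn equivalent to the existence of an $\mbf{L}_I$-stable Schubert subvariety inside $X_{w_\ell\mbf{Q}_d}$. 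Negating this equivalence delivers precisely the no-$\mbf{L}_I$-stable-Schubert-subvariety clause in condition~(2).

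Because the corollary is a direct synthesis of established results, no serious obstacle arises. The only bookkeeping point I would verify explicitly is that Proposition~\ref{P:LorbitCriterion} is being used in its contrapositive form (no $\mbf{L}_I$-orbit inside $X_{w_\ell\mbf{Q}_d}$ iff no degree $1$ head $\leq w_\ell$), and that Theorem~\ref{T:divisorsColors} is genuinely applicable $\ell$ by $\ell$, since $w$ itself being a degree $1$ head is needed for the maximality argument that identifies $\mbf{L}_I$-stable Schubert divisors in the first place.
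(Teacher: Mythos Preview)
Your proposal is correct and follows essentially the same route as the paper: both arguments combine the definition of toroidalness (applied to the $B_{\mbf{L}_I}$-stable Schubert divisors, cf.\ Remark~\ref{R:well known}) with Theorem~\ref{T:divisorsColors} to detect $\mbf{L}_I$-stability of $X_{w_\ell\mbf{Q}_d}$ via the condition $a_\ell-1\in\Comp{I}$, and with Proposition~\ref{P:LorbitCriterion} to translate ``contains no $\mbf{L}_I$-orbit'' into ``contains no $\mbf{L}_I$-stable Schubert subvariety.'' If anything, your write-up makes the logical flow (toroidal $\Rightarrow$ dichotomy (a)/(b) $\Rightarrow$ conditions (1)/(2)) more explicit than the paper's terse version.
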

\begin{proof}
By Theorem~\ref{T:divisorsColors}, if the first criteria holds then $X_{w_\ell\mbf{Q}_d}$ is $L_I$-stable. 
If the second criteria holds, $a_\ell - 1 \notin \Comp{I}$ implies that $X_{w_\ell\mbf{Q}_d}$ is not $L_I$-stable.
Hence, by Proposition~\ref{P:LorbitCriterion}, it will contain an $L_I$-orbit if and only if it strictly contains an $L_I$-stable Schubert subvariety.
\end{proof}

\section{The Singular Locus of a Schubert Variety} 
\label{S:Singularlocus}

In this section, we will focus on the singular locus of a Schubert variety in $G/Q$, where $G$ is a 
simple algebraic group. We do not
make any sphericity assumptions on the actions of the 
subgroups of $G$.
\vspace{.25cm}

First, we assume that $G$ is arbitrary 
(but as in Section~\ref{S:Preliminaries}).
Let $Q$ be a standard parabolic subgroup in $G$.
Let $X_{wQ}$ ($w\in W^Q$) be a 
singular Schubert variety and let $L_{max}$ 
denote the standard Levi subgroup of 
$P:=\mt{Stab}_G(X_{wQ})$.  
We denote by $\mt{Sing}(X_{wQ})$
the singular locus of $X_{wQ}$, which is a 
union of Schubert varieties. Then by normality
(Serre's criterion) we have 
$$
\dim \mt{Sing}(X_{wQ}) + 2\leq \dim X_{wQ}.
$$
Let $x$ be a point from $\mt{Sing}(X_{wQ})$. 
Since $L_{max} \cdot x$ consists of singular points, 
we see that $\mt{Sing}(X_{wQ})$ is 
a scheme theoretic union
of certain $L$-stable Schubert subvarieties $X_{\tau_1Q},\dots, X_{\tau_k Q}$,
where $\tau_i \in W^Q$ for $i=1,\dots, k$. 

Next, we restrict our attention to the case where $G$
is a simple algebraic group. 
The irreducible representations of $G$ are parametrized by the semigroup 
of dominant weights with respect to $T$, and furthermore, the weight 
lattice of $(G,T)$ is freely generated by the (fundamental) dominant weights corresponding
to the fundamental representations of $G$.
A dominant weight $\lambda$ is called {\em minuscule} 
if $\langle \lambda , \check{\alpha} \rangle  \leq 1$ 
for all positive coroots $\check{\alpha}$. 
A fundamental weight $\lambda$ is called {\em cominuscule} if 
the associated simple root occurs in the highest root with coefficient one. 
In our next result, we talk about minuscule and cominuscule
parabolic subgroups $Q$ in $G$. By this, in the former case, we mean 
a maximal parabolic subgroup $Q$ associated with a minuscule 
fundamental weight, and in the latter case we mean a maximal 
parabolic subgroup $Q$ associated with a cominuscule fundamental 
weight.

\begin{Proposition}\label{P:singularlocus}
Let $Q$ be a standard, minuscule or cominuscule parabolic subgroup in $G$.
Let $X_{wQ}$ ($w\in W^Q$) be a singular Schubert $L_{max}$-variety, where
$L_{max}$ is the standard Levi subgroup of the stabilizer subgroup $P:=\mt{Stab}_G(X_{wQ})$.
In this case, all $L_{max}$-stable Schubert subvarieties of $X_{wQ}$ are contained in 
the singular locus $\mt{Sing}(X_{wQ})$. 
\end{Proposition}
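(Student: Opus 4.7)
My plan is to prove the proposition in two main steps. First, I would reformulate $L_{max}$-stability as $P$-stability for Schubert subvarieties: any Schubert subvariety $X_{\tau Q} \subseteq X_{wQ}$ is automatically $B$-stable, and the Levi decomposition $P = L_{max} \cdot R_u(P)$ combined with $R_u(P) \subseteq U \subseteq B$ yields $P = L_{max} \cdot B$. Thus $L_{max}$-stability is equivalent to $P$-stability for such subvarieties. Since $\mt{Sing}(X_{wQ})$ is closed, $P$-stable (as $P$ preserves $X_{wQ}$), and a union of Schubert subvarieties, this reduces the claim to showing that every proper $P$-stable Schubert subvariety $X_{\tau Q} \subsetneq X_{wQ}$ is contained in $\mt{Sing}(X_{wQ})$.

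Next, I would invoke the following structural input, specific to the (co)minuscule setting: the smooth locus of $X_{wQ}$ coincides with the open $P$-orbit $P \cdot wQ$. One inclusion is formal, since the smooth locus is nonempty, open, and $P$-stable, hence contains the unique open $P$-orbit. The reverse inclusion is the essential content of the Brion--Polo structure theorem (cf.\ also Lakshmibai--Weyman) on (co)minuscule Schubert varieties. Explicitly, in the (co)minuscule case the tangent space at a $T$-fixed point $\tau Q \leq wQ$ admits a clean description as a direct sum of root spaces indexed by a Bruhat-combinatorial condition, and for any $\tau Q$ outside $P \cdot wQ$ this root-space count strictly exceeds $\dim X_{wQ}$; by $P$-equivariance, singularity then propagates to the entire $P$-orbit $P \cdot \tau Q$.

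With the structural input in hand, the proof concludes immediately. A proper $P$-stable Schubert subvariety $X_{\tau Q} \subsetneq X_{wQ}$ cannot contain any point of $P \cdot wQ$: if it did, then $P$-stability and the density of $P \cdot wQ$ in $X_{wQ}$ would force $X_{\tau Q} = X_{wQ}$, contradicting properness. Hence
\[
X_{\tau Q} \ \subseteq \ X_{wQ} \setminus (P \cdot wQ) \ = \ \mt{Sing}(X_{wQ}),
\]
which is the desired conclusion. The main obstacle in this plan is the structural input in the second step: the (co)minuscule hypothesis is exactly what guarantees the ``excess of root spaces at every non-open $T$-fixed point'' phenomenon, and without this rigidity the argument breaks down outside the (co)minuscule setting.
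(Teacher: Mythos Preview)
Your proposal is correct and follows essentially the same approach as the paper: reduce $L_{max}$-stability to $P$-stability, invoke the Brion--Polo result that in the (co)minuscule case $\mt{Sing}(X_{wQ}) = X_{wQ} \setminus P\cdot wQ$, and conclude that a proper $P$-stable Schubert subvariety must miss the open $P$-orbit. Your derivation of $P$-stability via $P = L_{max}\cdot B$ is in fact more direct than the paper's, which cites its earlier orbit-containment proposition; otherwise the two arguments are identical.
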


\begin{proof}
By Proposition~\ref{P:LorbitCriterion}, 
we know that $X_{\tau Q}$ is $P$-stable. 
Let $e_w$ denote the unique $T$-fixed point 
in the dense $B$-orbit of $X_{wQ}$. 
Clearly, $e_w \notin X_{\tau Q}$.
Otherwise, $X_{\tau Q}$ would be equal to 
$X_{wQ}$. On the other hand, 
by a famous result of 
Brion and Polo~\cite{BrionPolo} 
we know that 
the singular locus 
of a Schubert variety in a (co)minuscule 
partial flag variety is precisely the boundary 
of the Schubet variety. Here, 
the boundary of $X_{wQ}$ is
the complement
$$
\mt{Bd}(X_{wQ}) := X_{wQ} \setminus P \cdot e_w.
$$
Since $e_w \notin X_{\tau Q}$ and 
$X_{\tau Q}$ is $P$-stable, we see 
that $P\cdot e_w \cap X_{\tau Q} = \emptyset$.
Therefore, $X_{\tau Q} \subseteq \mt{Bd}(X_{wQ})$.
This finishes the proof. 

\end{proof}

\begin{Remark}
Let $X_{wQ}$ be an $L$-stable Schubert variety in $G/Q$,
where $Q$ is a parabolic subgroup of $G$.
As we mentioned in the second paragraph of this section,
the irreducible components of $\mt{Sing}(X_{wQ})$ 
are $L$-stable Schubert subvarieties of $X_{wQ}$. 
Note that this fact holds true for any 
connected subgroup $L$ of $P:=\mt{Stab}_G(X_{wQ})$.
Proposition~\ref{P:singularlocus} implies that, if $Q$
is a standard (co)minuscule parabolic subgroup and $X_{wQ}$ 
is an $L_{max}$-variety where $L_{max}$ is the standard Levi 
subgroup of $P$, then the
irreducible components of $\mt{Sing}(X_{wQ})$ are 
maximal $L_{max}$-stable Schubert subvarieties of $X_{wQ}$. 
Unfortunately, this maximality property does not hold 
true in general. For example, consider $\mbf{GL}_4/\mbf{B}_4$
and the Schubert subvariety $X_{(3412)}$. 
This Schubert variety is singular and its singular locus 
is precisely $X_{(1324)}$. The standard Levi subgroup 
of the stabilizer of $X_{(3412)}$ in $\mbf{GL}_4$ is 
$\mbf{L}_{\mt{max}}:=\mbf{L}_{\{\alpha_2 \}}$ and while $X_{(1324)}$ is $\mbf{L}_{\mt{max}}$-stable, 
the maximal $\mbf{L}_{\mt{max}}$-stable Schubert subvarieties of $X_{(3412)}$ are 
the divisors $X_{(1432)}$, $X_{(3142)}$, and $X_{(3214)}$. 
It would be of great interest to have an algebraic group-theoretic 
criterion for those $L$-stable Schubert subvarieties that comprise 
the singular locus in a general $G / Q$.
\end{Remark}

\begin{Corollary}\label{C:noLstable}
We preserve the hypothesis from Proposition~\ref{P:singularlocus}. 
In that case, $X_{wQ}$ has no $L_{max}$-stable Schubert divisors.  
In particular, $X_{wQ}$ is not a toroidal $L_{max}$-variety. 
\end{Corollary}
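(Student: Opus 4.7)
The plan is to deduce the first assertion directly from Proposition~\ref{P:singularlocus} by a dimension count, and then to exhibit an explicit color of $X_{wQ}$ that contains an $L_{max}$-orbit in order to rule out toroidality.

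First, by Proposition~\ref{P:singularlocus}, every $L_{max}$-stable Schubert subvariety $X_{\tau Q} \subseteq X_{wQ}$ is contained in $\mt{Sing}(X_{wQ})$. Since $X_{wQ}$ is normal, Serre's criterion gives
\[
\dim \mt{Sing}(X_{wQ}) \,+\, 2 \;\leq\; \dim X_{wQ},
\]
as already recorded at the start of Section~\ref{S:Singularlocus}. Therefore every $L_{max}$-stable Schubert subvariety of $X_{wQ}$ has codimension at least two in $X_{wQ}$, and in particular cannot be a divisor. This proves the first claim.

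For the ``in particular'' statement, I will show that there is a Schubert divisor of $X_{wQ}$ which is a color containing an $L_{max}$-orbit. By Remark~\ref{R:uniqueminimal}, there is a unique minimal $L_{max}$-stable Schubert subvariety $X_{\theta_{0} Q}$ of $X_{wQ}$, namely $\overline{L_{max} \cdot idQ} \cong L_{max}/(L_{max}\cap Q)$; this variety is smooth, being $L_{max}$-homogeneous. Because $X_{wQ}$ is singular by hypothesis, $X_{\theta_{0} Q} \neq X_{wQ}$, hence $\theta_{0} < w$ strictly in the Bruhat order on $W^{Q}$. By the chain property of the Bruhat order there is a saturated chain
\[
\theta_{0} \leq v_{1} < v_{2} < \cdots < v_{k} = w,
\]
so $v_{k-1}$ is covered by $w$ and satisfies $\theta_{0} \leq v_{k-1}$. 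Setting $v := v_{k-1}$, the Schubert variety $X_{vQ}$ is a Schubert divisor of $X_{wQ}$ which contains $X_{\theta_{0} Q}$, and thus contains an $L_{max}$-orbit.

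It remains to observe that $X_{vQ}$ is a color of $X_{wQ}$ as an $L_{max}$-variety. Indeed, $X_{vQ}$ is $B$-stable, hence $B_{L_{max}}$-stable since $B_{L_{max}} = L_{max} \cap B \subseteq B$; and $X_{vQ}$ is a Schubert divisor of $X_{wQ}$, so by the first assertion of the corollary it cannot be $L_{max}$-stable. Thus $X_{vQ}$ is a $B_{L_{max}}$-stable prime divisor of $X_{wQ}$ that is not $L_{max}$-stable but contains an $L_{max}$-orbit. By Definition~\ref{D:colorandtoroidal}, this precludes $X_{wQ}$ from being a toroidal $L_{max}$-variety. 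The main obstacle I anticipate is the existence of a divisor that contains the minimal $L_{max}$-stable Schubert subvariety; that is handled cleanly by the chain property above, and by the fact that $X_{wQ}$ being singular rules out $w = \theta_{0}$.
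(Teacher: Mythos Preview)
Your proof is correct and follows essentially the same approach as the paper's: both deduce the first claim from Proposition~\ref{P:singularlocus} together with the codimension-two bound on the singular locus coming from normality, and both establish non-toroidality by observing that the minimal $L_{max}$-homogeneous Schubert subvariety (Remark~\ref{R:uniqueminimal}) sits inside some Schubert divisor, which is then $B_{L_{max}}$-stable but not $L_{max}$-stable. Your write-up is slightly more explicit than the paper's in justifying $\theta_{0} < w$ (via smoothness of the homogeneous subvariety versus singularity of $X_{wQ}$) and in invoking the chain property of the Bruhat order to locate the divisor, but the underlying argument is the same.
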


\begin{proof}
It is well known that Schubert varieties are normal 
(see, for example,~\cite{Seshadri87}).
Therefore, $\mt{Sing}(X_{wQ})$ has
codimension at least 2 in $X_{wQ}$. By Proposition~\ref{P:singularlocus},
any $L_{max}$-stable Schubert variety $X_{\tau Q}$ in $X_{wQ}$ is contained
in $\mt{Sing}(X_{wQ})$. This shows that there are no $L_{max}$-stable
Schubert divisors in $X_{wQ}$. 

To prove the last claim, we recall that there is always a minimal $L_{max}$-homogeneous 
Schubert subvariety. Therefore, one of the Schubert divisors of $X_{wQ}$ contains an $L_{max}$-orbit,
but this Schubert divisor is not $L_{max}$-stable. This finishes the proof of our claim. 
\end{proof}

\section{Toroidalness Under BP Decomposition}
\label{S:BPdecomposition}

In this section we analyze how the smooth toroidal Schubert 
varieties change under the canonical projections 
$G/B\rightarrow G/Q$. Our main result is stated 
for $G=\mbf{GL}_n$. 
Once again, we start with mentioning some general results.

Let $P$ and $Q$ be two parabolic subgroups in $G$.
Assume that $P\subset Q$ and let $w\in W^P$. 
The {\em parabolic decomposition} of $w$ with respect
to $Q$ is the unique decomposition $w= vu$, where 
$v\in W^Q$, $u\in W_Q\cap W^P$, hence, the 
generic fiber of the projection 
$$\pi:=p_{P,Q} \vert_{X_{wP}} :\ X_{wP}\rightarrow X_{vQ}$$ 
is isomorphic to $X_{uP}$.

A parabolic decomposition $w=vu$ of $w$ is 
said to be a BP-decomposition if the Poincar\'e 
polynomial of $X_{wP}$ is the product of the
Poincar\'e polynomials of $X_{uP}$ and 
$X_{vQ}$. There are various equivalent characterizations
of BP-decompositions.
We state two of them; for details, see~\cite[Proposition 4.2]{RichmondSlofstra}.
\begin{enumerate}
\item $w=vu$ is a BP-decomposition if and only if 
$u$ is a maximal element of the intersection 
\begin{align}\label{A:u is maximal}
[id, w] \cap W^P\cap W_Q.
\end{align}
\item 
The {\em support} of an element $v\in W$, denoted
by $S(v)$, is the set 
$$
S(v) := \{ s \in S:\ s \leq v \}.
$$
By $D_L(u)$ we denote the 
{\em left descent set} of $u$,
$$
D_L(u) := \{ s\in S:\ \ell(su) < \ell(u) \}.
$$
Let $J$ and $K$ denote two subsets of the 
Coxeter generators $S$ of $W$ such that 
$W_P = W_I$ and $W_Q=W_K$.
In this notation, the parabolic decomposition $w=vu$ 
is a BP-decomposition
of $w$ if and only if 
$$
S(v) \cap K \subseteq D_L(u'),
$$
where $u'$ is the maximal element of the
coset $uW_P$. 
\end{enumerate}

\begin{Remark}
Also from~\cite{RichmondSlofstra}, we know that  
$X_{wP}$ is (rationally) smooth if and only if 
there exists a maximal parabolic subgroup $Q$ containing $P$
and a BP-decomposition $w= vu$ with $v\in W^Q$ and $u\in W^P \cap W_Q$ 
such that $X_{vQ}$ and $X_{uP}$ are (rationally) smooth Schubert varieties,
\end{Remark}

As we alluded at the beginning of this section, 
our goal here is to understand the change in 
the divisors of $X_{wP}$ under a BP-decomposition. 
We proceed with the assumption that $X_{wP}$ 
is a smooth Schubert variety. 
It is well known that for such Schubert 
varieties, the Picard group $\Pic {X_{wP}}$ of line bundles
on $X_{wP}$ is a finitely 
generated free abelian group and it is isomorphic to the divisor 
class group $\mt{Cl}(X_{wP})$.
At the same time, as a consequence of the fiber bundle property
of the BP-decomposition of a smooth Schubert variety, 
we know that the projection $\pi: X_{wP} \rightarrow X_{vQ}$ has a section,
therefore, the induced map 
$$
\gamma^* :\mt{Cl}(X_{wP}) \rightarrow \mt{Cl}(X_{vQ})
$$
is a surjection. 
Moreover, all of these fit to a commutative diagram as follows:
\begin{figure}[h]
\begin{center}
\begin{tikzpicture}
\node at (-4.25,1) (a) {$\Pic {G/P} \cong \mt{Cl}(G/P)$};
\node at (4.25,1) (b) {$\Pic {G/Q} \cong \mt{Cl}(G/Q)$};
\node at (-4.25,-1) (c) {$\Pic {X_{wP}} \cong \mt{Cl}(X_{wP})$};
\node at (4.25,-1) (d) {$\Pic {X_{vQ}} \cong \mt{Cl}(X_{vQ})$}; 
\node at (-2.7,0) {}; 
\node at (2.6,0) {}; 
\node at (0,1.35) {$p_{P,Q}^*$}; 
\node at (0,-1.35) {$\gamma^*$}; 
\draw[->>,thick] (a) to (b);
\draw[->>,thick] (a) to (c);
\draw[->>,thick] (b) to (d);
\draw[->,thick] (c) to (d);
\end{tikzpicture}
\end{center}
\end{figure}

\begin{Remark}\label{R:ST}
Let $X_{\tau P}$ be a Schubert divisor in $X_{w P}$.
Then $\tau = s w=s vu$ for some $s\in S(w)$. 
We have two remarks in order:
\begin{enumerate}
\item As a consequence of the above observation on the class groups, 
we see that either $\pi (X_{\tau P}) = \pi (X_{w P})$,
or $\pi (X_{\tau P})$ is the unique Schubert divisor in $X_{v Q}$. 
If $s\notin W_Q$, then $p_{P,Q} (X_{wQ}) \neq p_{P,Q}(X_{\tau Q})$.

\item Since $X_{wP}$ is $L$-stable, and since $\pi$ is 
the restriction of $p_{P,Q}$, 
which is $G$-equivariant, we see that $X_{vQ}$ is $L$-stable 
as well. In particular, the fibers of $\pi$ are $L$-stable. 
\end{enumerate}
\end{Remark}

Now we specialize to the type A situation. 
Let $\mathbf{L}$ be a standard Levi subgroup 
and $\mathbf{P}$ be a standard parabolic subgroup in $\mathbf{GL}_n$.

\begin{Theorem} \label{T:divisors to divisors1}
Let $X_{w\mathbf{P}}$ be a smooth spherical Schubert 
$\mathbf{L}$-variety in $\mathbf{GL}_n/\mathbf{P}$, and 
let $w=vu$ be a BP decomposition with the corresponding projection 
$\pi : X_{w \mathbf{P}} \rightarrow X_{v\mathbf{Q}}$, 
where $\mathbf{Q}$ is a maximal parabolic subgroup of 
$\mathbf{GL}_n$. 
If $X_{w\mathbf{P}}$ is a smooth toroidal Schubert
$\mathbf{L}$-variety, then so are $X_{v\mathbf{Q}}$ and $X_{u\mathbf{P}}$. 
\end{Theorem}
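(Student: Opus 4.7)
The proof splits along the fibration $\pi:X_{w\mathbf{P}}\to X_{v\mathbf{Q}}$ into a claim about the base $X_{v\mathbf{Q}}$ and a claim about the typical fiber $X_{u\mathbf{P}}$. Smoothness of both factors will follow immediately from the Richmond--Slofstra characterization of smooth Schubert varieties via BP decomposition, recorded in the Remark preceding the theorem. For $\mathbf{L}$-stability of $X_{v\mathbf{Q}}$, I would cite Remark~\ref{R:ST}(2) directly; for that of $X_{u\mathbf{P}}$, I would identify $X_{u\mathbf{P}}$ as a Schubert variety inside $\mathbf{Q}/\mathbf{P}\cong \mathbf{L}_\mathbf{Q}/(\mathbf{L}_\mathbf{Q}\cap \mathbf{P})$ and use that the stabilizer of $id\mathbf{Q}$ in $\mathbf{L}$ preserves the fiber over that base point, giving an action on $X_{u\mathbf{P}}$ of the Levi $\mathbf{L}\cap \mathbf{L}_\mathbf{Q}$ of $\mathbf{L}_\mathbf{Q}$.

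Toroidalness of $X_{v\mathbf{Q}}$ I would prove by pull-back. Let $D'$ be a $\mathbf{B}_\mathbf{L}$-stable prime divisor of $X_{v\mathbf{Q}}$ that contains an $\mathbf{L}$-orbit. Since $\pi$ is a fiber bundle with irreducible fibers, the preimage $\pi^{-1}(D')$ is an irreducible $\mathbf{B}_\mathbf{L}$-stable divisor of $X_{w\mathbf{P}}$, and the preimage of the $\mathbf{L}$-orbit in $D'$ is a union of $\mathbf{L}$-orbits lying inside $\pi^{-1}(D')$. The toroidal hypothesis on $X_{w\mathbf{P}}$ then forces $\pi^{-1}(D')$ to be $\mathbf{L}$-stable, and applying $\pi$ shows that $D'$ itself is $\mathbf{L}$-stable.

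Toroidalness of $X_{u\mathbf{P}}$ I would prove by an extension argument in the opposite direction. A prime divisor $D$ of $X_{u\mathbf{P}}$ that contains a Levi orbit should spread over the base to a \emph{vertical} divisor $\widetilde{D}\subseteq X_{w\mathbf{P}}$ in the sense of Remark~\ref{R:ST}(1); concretely, the Schubert divisors of $X_{u\mathbf{P}}$ should correspond to the Schubert divisors $X_{sw\mathbf{P}}$ of $X_{w\mathbf{P}}$ with $s\in W_\mathbf{Q}$. Once the extension $D\mapsto \widetilde{D}$ is in place, toroidalness of $X_{w\mathbf{P}}$ will force $\widetilde{D}$ to be $\mathbf{L}$-stable, and restricting back to the fiber will yield the required Levi stability of $D$.

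The main obstacle is this last step. One has to make rigorous the extension $D\mapsto \widetilde{D}$, which amounts to identifying, via the BP fiber bundle structure, the Borel-stable prime divisors of $X_{u\mathbf{P}}$ with the vertical $\mathbf{B}_\mathbf{L}$-stable prime divisors of $X_{w\mathbf{P}}$, and then to verify that Levi orbits in the fiber correspond to $\mathbf{L}$-orbits in the total space under this extension. The hypothesis that $\mathbf{Q}$ is a maximal parabolic, which forces $X_{v\mathbf{Q}}$ to be a Grassmann Schubert variety and constrains which Schubert divisors of $X_{w\mathbf{P}}$ are vertical versus horizontal (Remark~\ref{R:ST}(1)), is where this identification should come from.
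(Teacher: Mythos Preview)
Your argument for $X_{v\mathbf{Q}}$ is essentially the paper's: pull a would-be bad color back along the $\mathbf{L}$-equivariant $\pi$ and contradict toroidalness upstairs.

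For $X_{u\mathbf{P}}$ the paper takes a different and much shorter route that sidesteps your acknowledged obstacle entirely. Instead of extending divisors from the fiber to the total space, the paper invokes Lemma~\ref{L:BB}: a smooth complete variety is toroidal if and only if it is \emph{regular} in the sense of Bifet--De~Concini--Procesi. Since $X_{w\mathbf{P}}$ is smooth and toroidal, it is a regular $\mathbf{L}$-variety; the fiber $X_{u\mathbf{P}}$, being $\mathbf{L}$-stable (Remark~\ref{R:ST}(2)) and irreducible inside a spherical variety, is an $\mathbf{L}$-orbit closure. By the remark following the definition of regular varieties, every $G$-orbit closure in a regular $G$-variety is again regular, so $X_{u\mathbf{P}}$ is regular and hence toroidal by Lemma~\ref{L:BB} in the reverse direction. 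No divisor-extension machinery is required, and the vertical/horizontal dichotomy of Remark~\ref{R:ST}(1) is not used for this half of the proof.

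Note also a discrepancy in the acting group: you pass to $\mathbf{L}\cap\mathbf{L}_\mathbf{Q}$ on the fiber, whereas the theorem asserts toroidalness of $X_{u\mathbf{P}}$ as an $\mathbf{L}$-variety. The paper treats $X_{u\mathbf{P}}$ as sitting inside $X_{w\mathbf{P}}$ and $\mathbf{L}$-stable there (again citing Remark~\ref{R:ST}(2)), so the full $\mathbf{L}$ acts; your restriction to $\mathbf{L}\cap\mathbf{L}_\mathbf{Q}$ would prove a weaker statement than what is claimed.
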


\begin{proof}
The smoothness part of our statement follows from the above discussion,
so, we will prove toroidalness. 
First, we will show that $X_{v\mathbf{Q}}$ is a 
toroidal Schubert $\mathbf{L}$-variety.

Note that $\pi$ is an $\mbf{L}$-equivariant, faithfully flat, proper morphism
of varieties. Therefore, it pull backs divisors to divisors, see~\cite[Chapter I.1.7]{Fulton}. 
Assume that $X_{v\mbf{Q}}$ is not toroidal. Then there exists a 
$\mbf{B}_\mbf{L}$-stable but not $\mbf{L}$-stable divisor $D$ in $X_{v\mbf{Q}}$ such 
that $D$ contains an $\mbf{L}$-orbit. 
But then the same is true for the pull-back divisor $\pi^* D$ in $X_{w\mbf{P}}$.
This contradicts the toroidalness of $X_{w\mbf{P}}$. 
Therefore, $X_{v\mbf{Q}}$ is a toroidal $\mbf{L}$-variety.

Next, we will show that $X_{u\mbf{P}}$ is toroidal. 
We already know that it is smooth, and we know that 
it is $\mbf{L}$-stable by Remark~\ref{R:ST}. 
Since $X_{w\mbf{P}}$ is spherical, so is $X_{u\mbf{P}}$.
But more is true; $X_{w\mbf{P}}$ is toroidal, hence, 
it is a regular $\mbf{L}$-variety. Thus, any $\mbf{L}$-orbit closure 
in $X_{w\mbf{P}}$ is a regular variety.
In particular, $X_{u\mbf{P}}$ is regular, hence, it is a toroidal $\mbf{L}$-variety
by Lemma~\ref{L:BB}.

\end{proof}

Combined with Corollary~\ref{C:GrassmannToroidal}, Theorem~\ref{T:divisors to divisors1} gives 
a nontrivial necessary condition for a Schubert variety to be a toroidal Schubert variety. 

\bibliography{References}
\bibliographystyle{plain}

\end{document}